\newtheorem{prop}{Proposition}[section]
\newtheorem{thm}[prop]{Theorem}
\newtheorem{cor}[prop]{Corollary}
\newtheorem{defn}[prop]{Definition}
\theoremstyle{definition}
\newtheorem{rem}[prop]{Remark}
\newtheorem{exwith}[prop]{Example}
\newtheorem*{ack}{Acknowledgement}
\def\co{\colon\thinspace}
\newcommand{\BB}{\mathcal B}
\newcommand{\C}{\mathbb C}
\newcommand{\rmd}{\mathrm d}
\newcommand{\D}{\mathbb D}
\newcommand{\rme}{\mathrm e}
\newcommand{\HH}{\mathcal H}
\newcommand{\bfh}{\mathbf h}
\newcommand{\rmi}{\mathrm i}
\newcommand{\N}{\mathbb N}
\newcommand{\bfp}{\mathbf p}
\newcommand{\bfq}{\mathbf q}
\newcommand{\R}{\mathbb R}
\newcommand{\RR}{\mathcal R}
\newcommand{\bfs}{\mathbf s}
\newcommand{\bft}{\mathbf t}
\newcommand{\VV}{\mathcal V}
\newcommand{\bfv}{\mathbf v}
\newcommand{\WW}{\mathcal W}
\newcommand{\bfw}{\mathbf w}
\newcommand{\bfx}{\mathbf x}
\newcommand{\bfy}{\mathbf y}
\newcommand{\Z}{\mathbb Z}
\newcommand{\bfz}{\mathbf z}
\newcommand{\lra}{\longrightarrow}
\newcommand{\ra}{\rightarrow}
\DeclareMathOperator{\ev}{\mathrm{ev}}
\DeclareMathOperator{\im}{\mathrm{Im}}
\DeclareMathOperator{\ind}{\mathrm{ind}}
\DeclareMathOperator{\Int}{\mathrm{Int}}
\DeclareMathOperator{\loc}{\mathrm{loc}}
\begin{document}

\author{Myeonggi Kwon}
\author{Kevin Wiegand}
\author{Kai Zehmisch}
\address{Mathematisches Institut, Justus-Liebig-Universit\"at Gie{\ss}en,
Arndtstra{\ss}e 2, D-35392 Gie{\ss}en, Germany}
\email{Myeonggi.Kwon@math.uni-giessen.de, Kai.Zehmisch@math.uni-giessen.de}
\address{Mathematisches Institut, Ruprecht-Karls-Universit\"at Heidelberg,
Im Neuenheimer Feld 205, D-69120 Heidelberg, Germany}
\email{kwiegand@mathi.uni-heidelberg.de}

\title[Diffeomorphism type via aperiodicity]
{Diffeomorphism type via aperiodicity in Reeb dynamics}

\date{09.10.2019}

\begin{abstract}
  We characterise boundary shaped
  disc like neighbourhoods
  of certain isotropic submanifolds
  in terms of aperiodicity of Reeb flows.
  We prove uniqueness of homotopy
  and diffeomorphism type
  of such contact manifolds
  assuming non-existence of short periodic Reeb orbits.
\end{abstract}

\subjclass[2010]{53D35; 37C27, 37J55, 57R17.}
\thanks{This research is part of a project in the SFB/TRR 191
{\it Symplectic Structures in Geometry, Algebra and Dynamics},
funded by the DFG}

\maketitle

%%%%%%%%%%%%%%%%%%%%%%%%%%%%%%%%%%%%%%%%%%%%%%%%%%%%%%%%%%%%%%%%%%%%%%

\section{Introduction\label{sec:intro}}

In their seminal work
Gromov \cite{grom85} and Eliashberg \cite{elia90} observed
that foliations by holomorphic curves can be used
to prove uniqueness of the diffeomorphism
(in fact symplectomorphism)
type of minimal symplectic fillings
of the standard contact $3$-sphere,
i.e.\
all such fillings are diffeomorphic to the $4$-ball $D^4$.
The method they used,
the so-called {\it filling by holomorphic curves} method,
is obstructed by bubbling off of holomorphic spheres.
Related classification results in dimension $4$
can be found in
\cite{mcdu90,hind00,stip02,wend10,lmy17,siva17}.

On the other hand
Hofer \cite{hof93} discovered
a fundamental property of holomorphic curves in symplectisations;
non-compactness properties of holomorphic curves of finite energy
are strongly related to the existence of periodic Reeb orbits.
Combining the method of
filling by holomorphic curves
with the theory of finite energy planes
Eliashberg--Hofer \cite{eh94}
determined the diffeomorphism
(in fact contactomorphism)
type of certain contact manifolds with boundary $S^2$:
Any compact contact manifold
with boundary $S^2=\partial D^3$
is diffeomorphic to $D^3$
provided there exists a contact form
that is equal to the standard contact form on $D^3$
near the boundary $S^2$
such that the corresponding Reeb vector field
does not admit a periodic orbit
with period less than $\pi$.
A similar characterisation of $D^2\times S^1$
in terms of Reeb dynamics
was obtained by
Kegel--Schneider--Zehmisch \cite{kschz}.

In higher dimensions
the diffeomorphism type of symplectically aspherical fillings
of the standard contact sphere
was determined by Eliashberg--Floer--McDuff
\cite[Theorem~1.5]{mcdu91}:
Any such filling is diffeomorphic to the ball $D^{2n}$.
The proof they used
was refined to the so-called
{\it degree method}
(see Section \ref{subsec:filling} for an explanation)
by Barth--Geiges--Zehmisch \cite{bgz19}
allowing a much wider class of contact type boundaries,
see also \cite{bgm,gkz,kwzeh}.

The contact theoretic counterpart
in higher dimensions
was not clear for a while.
It was conjectured by
Bramham--Hofer \cite{bh12}
that the existence of trapped Reeb orbits
on a compact contact manifold,
whose boundary neighbourhoods
look like neighbourhoods
of $S^{2n}=\partial D^{2n+1}$ in $D^{2n+1}$,
implies the existence of periodic Reeb orbits.
A counterexample to that conjecture was given by
Geiges--R\"ottgen--Zehmisch \cite{grz14}.
It suggests
that the diffeomorphism type
in higher dimensional contact geometry
should be determined via a method not based
on non-existence of trapped orbits as done in 
Eliashberg--Hofer \cite{eh94}.

In fact,
using the degree method, Geiges--Zehmisch \cite{gz16b}
proved that any compact strict contact manifold
that has an aperiodic Reeb flow
is diffeomorphic to $D^{2n+1}$
provided that the following condition is satisfied:
A neighbourhood of the boundary
admits a strict contact embedding
into the standard $D^{2n+1}$ mapping the boundary
to $S^{2n}=\partial D^{2n+1}$.
This was generalised by
Barth--Schneider--Zehmisch \cite{bschz19}
to situations in which $D^{2n+1}$ is replaced by
the disc bundle of 
$\R\times T^*T^d\times\C\times\C^{n-1-d}$
whenever $n-1\geq d$.

The aim of this work
is to replace the torus $T^d$
by more general $d$-dimensional manifolds,
see Theorem \ref{thm:mainthm} below.
Again the argument will be based
on the construction of a proper degree $1$
evaluation map on the moduli space
of $1$-marked holomorphic discs
with varying Lagrangian boundary conditions.
The restriction to $T^d$ in \cite{bschz19}
was caused by the choice of the boundary conditions
set up for the holomorphic discs.
This led to trivialising the cotangent bundle
of $T^d$ in a Stein holomorphic fashion.
In order to replace $D^{2n+1}$ by
the disc bundle of 
$\R\times T^*Q\times\C\times\C^{n-1-d}$
for a wider class of manifolds $Q$
we choose different boundary conditions
for the holomorphic discs.
Instead of taking a foliation of $T^*Q$ by sections
we consider the foliation $T^*Q$ given by the cotangent fibres.
This will result in a more advanced analysis
for the holomorphic discs.
The essential point here
will be a target rescaling argument
in Section \ref{sec:compactness},
which was invented by Bae--Wiegand--Zehmisch \cite{bwz}
in the context of virtually contact structures,
to ensure $C^0$-bounds on holomorphic discs
in the situation of general manifolds $Q$.
Furthermore
in order to obtain $C^0$-bounds of holomorphic discs
along their boundaries in $T^*Q$-direction
we develop an integrated maximum principle
in Sections \ref{sec:potentialsont*q}
and \ref{subsec:inmaxprinc}.

%%%%%%%%%%%%%%%%%%%%%%%%%%%%%%%%%%%%%%%%%%%%%%%%%%%%%%%%%%%%%%%%%%%%%%

\section{Aperiodicity and boundary shape\label{sec:aperboundshape}}

Strict contact manifolds $(M,\alpha)$
are naturally equipped
with a nowhere vanishing vector field,
namely the Reeb vector field of $\alpha$.
Assuming $\alpha$ to be {\bf aperiodic},
i.e.\ assuming that the Reeb vector field
does not admit any periodic solution,
the diffeomorphism type of $M$
can be determined in many situations.
Here we are interested in comparing
compact manifolds with boundary $M$
with neighbourhoods of isotropic submanifolds
of the sort
$D\big(T^*Q\oplus\underline{\R}^{2n+1-2d}\big)$.
This requires boundary conditions
for the Reeb vector field as we will explain in the following:

%%%%%%%%%%%%%%%%%%%%%%%%%%%%%%%%%%%%%%%%%%%%%%%%%%%%%

\subsection{A model\label{subsec:amodel}}

Let $Q$ be a closed, connected Riemannian manifold
of dimension $d$ and let $n\in\N$ such that $n-1\geq d$.
Define a strict contact manifold $(C,\alpha_0)$
by setting
\[
C:=\R\times T^*Q\times\C\times\C^{n-1-d}
\]
and
\[
\alpha_0:=
\rmd b
+\lambda
+\frac12\big(x_0\rmd y_0-y_0\rmd x_0\big)
-\sum_{j=1}^{n-1-d}y_j\rmd x_j\;,
\]
where $b\in\R$,
$\lambda$ is the Liouville $1$-form of $T^*Q$,
$x_0+\rmi y_0$ and $x_j+\rmi y_j$
are coordinates on $\C$
and $\C^{n-1-d}$, resp.
Throughout the text
we will use vector notation $\bfx$ and $\bfy$
for the coordinate tuples
$(x_1,\ldots,x_{n-1-d})$ and
$(y_1,\ldots,y_{n-1-d})$, resp.,
so that we can abbreviate
\[
-\bfy\rmd\bfx=
-\sum_{j=1}^{n-1-d}y_j\rmd x_j\;.
\]
The Reeb vector field of $\alpha$ is given by $\partial_b$,
which is tangent to the real lines $\R\times\{*\}$.

By \cite[Theorem 6.2.2]{gei08}
$(C,\alpha_0)$ is the model neighbourhood
of an isotropic submanifold $Q$
of a strict contact manifold
provided that $Q$
has trivial symplectic normal bundle
and the dimension $d$ of $Q$ is smaller than $n$.
Observe,
that $(C,\alpha_0)$ is the contactisation
of the Liouville manifold
\[
\Big(
T^*Q\times\C\times\C^{n-1-d},\,
\lambda
+\frac12\big(x_0\rmd y_0-y_0\rmd x_0\big)
-\bfy\rmd\bfx
\Big)
\;.
\]
The statements about the model neighbourhood situation
and contactisation of course hold in the critical case $d=n$ also.
Simply ignore the Euclidean factors in the formulations.

%%%%%%%%%%%%%%%%%%%%%%%%%%%%%%%%%%%%%%%%%%%%%%%%%%%%%

\subsection{Fibrewise shaped\label{subsec:fibreshape}}

The space $C$ itself is the total space
of the stabilised cotangent bundle
$T^*Q\oplus\underline{\R}^{2n+1-2d}$.
Let $S\subset C$ be a hypersurface diffeomorphic to
the unit sphere bundle
$S\big(T^*Q\oplus\underline{\R}^{2n+1-2d}\big)$
such that
  \begin{enumerate}
    \item
      $S$ intersects each fibre transversely
      in a sphere
        \[
        S_q:=S\cap\big(T_q^*Q\oplus\R^{2n+1-2d}\big)\;,
        \quad q\in Q\;,
        \]
      of dimension $2n-d$, and
    \item
      each $S_q$ intersects the flow lines
      of $\partial_b$ in at most two points.
      We require transverse intersections
      if such a flow line intersects $S_q$
      in two points.
      Points of tangency,
      i.e.\ points that correspond to single intersections,
      form a submanifold diffeomorphic to a
      $(2n-d-1)$-sphere.
  \end{enumerate}
In view of condition (1)
we remark that the hypersurface $S$
bounds a bounded domain $D$ inside $C$,
whose closure is diffeomorphic
to the closed unit disc bundle
$D\big(T^*Q\oplus\underline{\R}^{2n+1-2d}\big)$.
Condition (2) will play an important role in
Section \ref{subsec:comviagluing}.
We call $S$ a {\bf shape}.

%%%%%%%%%%%%%%%%%%%%%%%%%%%%%%%%%%%%%%%%%%%%%%%%%%%%%

\subsection{Standard near the boundary}
\label{subsec:standnearbound}

Let $(M,\alpha)$ be a strict contact manifold
of dimension $2n+1$ that is
{\bf standard near the boundary},
i.e.\
  \begin{enumerate}
    \item
      connected, compact with boundary $\partial M$
      diffeomorphic to
      \[
      \partial M\cong S\big(T^*Q\oplus\underline{\R}^{2n+1-2d}\big)
      \]
    \item
      such that there exist
      an open collar neighbourhood $U\subset M$ of $\partial M$
      and an embedding
      $\varphi:(U,\partial U=\partial M)\ra (D,S)$
      such that $\varphi^* \alpha_0 = \alpha$ on $U$.
  \end{enumerate}
If $\varphi$ is given we will call $S$ the {\bf shape} of $M$.

In order to quantify
aperiodicity of $(M,\alpha)$
we denote by $\inf_0(\alpha)>0$
the minimal action of all {\it contractible} closed Reeb orbits
w.r.t.\ $\alpha$.
By Darboux's theorem $\inf_0(\alpha)$
is indeed positive.
For aperiodic $\alpha$
we set $\inf_0(\alpha)$ to be $\infty$.

A second ingredient for quantisation comes with
the subset
\[
Z:=\R\times T^*Q \times\D\times\C^{n-1-d}
\]
of $C$ denoting the closed unit disc in $\C$ by $\D$.
We may assume that $S\subset\Int Z$
by scaling radially via
$\big(t^2b,t^2\bfw,tz_0,t\bfz\big)$,
$t\in(0,1)$, if necessary.
The contact form $\alpha$ on $M$
will be replaced by $t^2\alpha$ accordingly.

%%%%%%%%%%%%%%%%%%%%%%%%%%%%%%%%%%%%%%%%%%%%%%%%%%%%%

\subsection{Main theorem}\label{subsec:mainthm}

We compare the homology,
homotopy and diffeomorphism type of $M$
with the one of
$D\big(T^*Q\oplus\underline{\R}^{2n+1-2d}\big)$.
This will be done
in terms of embeddings
\[
D\big(T^*Q\oplus\underline{\R}^{2n+1-2d}\big)
\lra M
\]
determined by a small neighbourhood of a section
$Q\ra S$ as constructed e.g.\ 
at the beginning of Section \ref{sec:homotopytype}.
We denote the image of such an embedding by
\[
M_0:=
D\big(T^*Q\oplus\underline{\R}^{2n+1-2d}\big)
\;.
\]

\begin{thm}
\label{thm:mainthm}
 Let $Q$ be an oriented, closed, connected Riemannian manifold
 of dimension $d$.
 Let $n\in\N$ such that $n-1\geq d$.
 Let $(M,\alpha)$ be a strict contact manifold
 that is standard near the boundary
 as described in Section \ref{subsec:standnearbound}.
 Assume that the shape $S\cong\partial M$ of $M$
 is contained in the interior of $(Z,\alpha_0)$.
 If $\inf_0(\alpha)\geq\pi$, then the following is true:
   \begin{enumerate}
   \item [(i)]
   Any embedding
   $Q\ra M$ given by a section $Q\ra S$
   induces isomorphisms of homology
   and surjections of fundamental groups.
   If in addition $\pi_1Q$ is abelian,
   then the surjections are injective.
   \item [(ii)]
   Assume that $\pi_1Q$ is abelian and
   that at least one
   of the following conditions is satisfied:
     \begin{enumerate}
     \item [(a)]
     $\pi_1Q$ is finite.
     \item [(b)]
     $Q$ is aspherical.
     \item [(c)]
     $Q$ is simple and $S\ra Q$ a trivial sphere bundle, or,
     more generally, $S$ is a simple space.
     \end{enumerate}
   Then $M$ is homotopy equivalent to $M_0$.
   \item [(iii)]
    If in addition to the assumptions in (ii)
    (including choices of one of the conditions (a)-(c))
    we have that $2n+1\geq 7$ and
    that the Whitehead group of $\pi_1Q$ is trivial,
    then $M$ is diffeomorphic to $M_0$.
   \end{enumerate}
\end{thm}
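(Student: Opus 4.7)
The plan is to apply and extend the degree method of Barth--Geiges--Zehmisch \cite{bgz19}, as adapted to isotropic tube targets in \cite{bschz19}. Concretely, I would construct a parametrized moduli space $\MM$ of $1$-marked finite-energy holomorphic discs with Lagrangian boundary conditions whose evaluation at the marked point defines a proper map $\ev\co\MM\lra M$ of degree one which, after identification of $\MM$ with $M_0$, is cobordant to the inclusion $M_0\hookrightarrow M$. Once such an $\ev$ is in hand, parts (i)--(iii) are extracted by standard topological machinery.

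For the setup, I would first complete $M$ to an open symplectic target on which holomorphic curves can be studied by gluing a standard cap to $\partial M$ along the lines of Section \ref{subsec:comviagluing}, using condition (2) of the shape. An almost complex structure compatible with the Liouville splitting underlying $\alpha_0$ is chosen. Following the announcement in the introduction, the new ingredient is the choice of boundary conditions: instead of the graphs of closed $1$-forms on $Q$ used in \cite{bschz19}, the discs are required to send $\partial\D$ into Lagrangians built from a single cotangent fibre $T^*_q Q$, the unit circle $\partial\D\subset\C$, and an appropriate totally real slice of $\C^{n-1-d}$. The parameters $q\in Q$ and the slice datum, together with the interior marked point, organise $\MM$ so that, for the model target, it is diffeomorphic to $M_0$ and $\ev$ is the inclusion.

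The analytic core, and what I expect to be the main obstacle, is the compactness of $\MM$. Sphere bubbles are excluded because the ambient Liouville manifold is symplectically aspherical; disc bubbles because the boundary Lagrangians are exact; and breaking off of finite-energy planes because each disc carries symplectic area exactly $\pi$, whereas the hypothesis $\inf_0(\alpha)\geq\pi$ forbids contractible closed Reeb orbits of smaller action. What remains is a genuine $C^0$-bound on the discs. Bounds in the $\C$- and $\C^{n-1-d}$-factors follow by convexity, and bounds in the $\R$-direction come from the target rescaling idea of Bae--Wiegand--Zehmisch \cite{bwz}, adapted in Section \ref{sec:compactness}. The genuinely new difficulty is the $T^*Q$-direction along $\partial\D$: because the Liouville primitive is unbounded along cotangent fibres, the pointwise maximum principle fails, and one has to replace it by the integrated maximum principle developed in Sections \ref{sec:potentialsont*q} and \ref{subsec:inmaxprinc} using the Riemannian geometry of $Q$.

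With compactness in hand, a standard gluing and cobordism argument shows that $\MM$ is a smooth $(2n+1)$-manifold with $\ev$ restricting to a diffeomorphism $\partial\MM\ra\partial M$, hence $\deg(\ev)=1$. Part (i) follows from general facts about proper degree-one maps: surjectivity on $\pi_1$ and, via Poincar\'e--Lefschetz duality, isomorphism on homology; when $\pi_1 Q$ is abelian, a surjection of abelian groups that is an isomorphism on $H_1$ is injective. For part (ii) one applies Whitehead's theorem to $M_0\hookrightarrow M$, where each of the conditions (a), (b), (c) is exactly what the relative Hurewicz theorem needs to promote the homology equivalence from (i) into a weak, hence honest, homotopy equivalence. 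Finally, part (iii) uses the $s$-cobordism theorem in dimension $2n+1\geq 7$: the region $\overline{M\setminus M_0}$ is an $h$-cobordism whose Whitehead torsion lies in $\mathrm{Wh}(\pi_1 Q)$ and therefore vanishes, so it is a smooth cylinder and $M$ is diffeomorphic to $M_0$.
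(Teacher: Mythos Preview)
Your overall strategy matches the paper's closely: the same glued target, the same fibrewise Lagrangian boundary conditions, the integrated maximum principle for the $T^*Q$-direction, target rescaling in the $b$-direction, and the $s$-cobordism theorem at the end. Two points deserve correction, though.

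First, your exclusion of disc bubbles is wrong as stated. The Lagrangians
\[
L^{\bft}_q=\{0\}\times\R\times T^*_qQ\times\partial\D\times\R^{n-1-d}\times\{\bft\}
\]
are \emph{not} exact: the restriction of $\alpha_0$ to $L^{\bft}_q$ contains the term $\tfrac12(x_0\,\rmd y_0-y_0\,\rmd x_0)|_{\partial\D}=\tfrac12\,\rmd\theta$, which is closed but not exact on the circle factor. So exactness cannot rule out disc bubbles; indeed the standard discs themselves have positive area. The paper instead uses an energy quantisation argument: every non-constant holomorphic disc with boundary on $L^{\bft}_q$ has action a positive integer multiple of $\pi$ (coming from the winding number of the $\D$-coordinate $h_0$ along $\partial\D$), while the total energy is exactly $\pi$. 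Combined with the three fixed marked points in condition (w${}_3$), this forces any limit configuration to consist of a single disc, ruling out bubbling. You should replace your exactness claim with this mechanism.

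Second, your packaging of the moduli space as ``$\MM\cong M_0$ with $\ev|_{\partial\MM}$ a diffeomorphism onto $\partial M$'' is tidier than what actually happens. In the paper the evaluation map goes from $\WW\times\D$ to the non-compact $\hat{Z}$ (not to $M$), is proper of degree~$1$, and the deduction of homology surjectivity for $Q\hookrightarrow M$ proceeds via the diagram in Section~\ref{subsec:lifttop} and a deformation retraction $\hat{Z}\to M$. The upgrade from surjectivity to isomorphism in part~(i) is not a formal fact about degree-one maps between manifolds with boundary but uses Poincar\'e duality on $(M,\partial M)$ together with $n-1\geq d$ (Proposition~\ref{prop:homologytyp}). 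Similarly, for part~(ii) the three cases (a)--(c) are handled by genuinely different arguments (finite covers, contractibility of $\widetilde{Q}$, and the simple-space Hurewicz kernel, respectively) rather than a single invocation of relative Hurewicz; your sketch should reflect that.
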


%%%%%%%%%%%%%%%%%%%%%%%%%%%%%%%%%%%%%%%%%%%%%%%%%%%%%

\subsection{Comments on Theorem \ref{thm:mainthm}}\label{subsec:comonthm}

In view of the contact connected sum
the bound $\pi$ in the theorem is optimal,
cf.\ \cite[Remark 1.3.(1)]{gz16b}.
The shape boundary condition
can be isotoped to a round shape
through shaped hypersurfaces.
Hence,
we recover the results
from \cite{gz16b,bschz19}
and obtain independence of the choice of metric.

The orientation of $Q$ will not be used
in the compactness argument below.
But will be needed for an orientation
of the moduli space.
Without orientation we only can talk about the mod-$2$
degree of the evaluation map.
Hence,
if $Q$ is not orientable
only part (i) of the theorem remains true
replacing homology by homology with
$\Z_2$-coefficients.

Similarly,
the boundary of $M$ is necessarily connected,
cf.\ \cite[Remark 1.3.(4)]{gz16b}.
Indeed,
suppose $\partial M$ has several components
that have individually a shape embedding
into potentially different stabilised cotangent bundles.
Here different $Q$'s with varying dimensions are allowed.
$M$ itself satisfies the remaining stated properties
from Theorem \ref{thm:mainthm}.
In this situation
one can set up the moduli space of holomorphic discs
with respect to one distinguished boundary component;
the other components will come with the maximum principle
for holomorphic curves.
In other words
the holomorphic disc analysis will be uneffected
and the evaluation map on the moduli space will be of degree one.
This contradicts the fact
that no holomorphic disc can exceed
one of the additional boundary components
due to the maximum principle.

\begin{exwith}
\label{ex:asphericalthm}
 In view of the Hadamard--Cartan and the Farrell--Jones
 theorems the assumptions
 of Theorem \ref{thm:mainthm} part (b) in (ii) and (iii)
 are satisfied for all
 Riemannian manifolds $Q$ with abelian fundamental group
 and non-positive sectional curvature.
 Hence, we recover $Q=T^d$ from \cite{bschz19}.
\end{exwith}

\begin{exwith}
\label{ex:unitaryspherethm}
 A particular class of manifolds $Q$
 that satisfy the assumptions
 of Theorem \ref{thm:mainthm} part (c) in (ii) and (iii)
 are products of unitary groups and spheres
 of any dimensions.
 Indeed,
 such $Q$ always have stably trivial tangent bundle,
 are simple with fundamental group free abelian
 so that in particular the Whitehead group of those is trivial.
\end{exwith}

\begin{rem}
\label{rem:handlebody}
 If we know more about the handle body structure of $M$
 conditions on the topology of $Q$ can be relaxed.
 For example
 if $M$ has the homotopy type
 of a CW complex of codimension $2$
 so that the inclusion $\partial M\subset M$ is $\pi_1$-injective
 the assumption $\pi_1Q$ abelian in
 Theorem \ref{thm:mainthm} can be dropped everywhere.
 
 If $M$ admits a handle body structure
 with all handles of index at most $\ell$
 and if $d+\max(d,\ell)\leq2n-1$,
 then $M$ and $M_0$ are homotopy equivalent
 without any further conditions.
 This follows with the argument
 from \cite[Theorem 7.2]{bgz19}
 using the diagram in Section \ref{subsec:acobordism}.
 In particular,
 the CW-dimension of $M$ must be equal to $d$.
 In fact,
 one can conclude with the diffeomorphism type
 as in \cite[Theorem 9.4]{bgz19},
 cf.\ \cite[Example 9.5]{bgz19}.
\end{rem}

%%%%%%%%%%%%%%%%%%%%%%%%%%%%%%%%%%%%%%%%%%%%%%%%%%%%%%%%%%%%%%%%%%%%%%

\section{The degree method\label{sec:mainidea}}

We will explain the main idea
of the proof of Theorem \ref{thm:mainthm},
which will be given in
Sections \ref{sec:stholdiscs} -- \ref{sec:homotopytype}.

%%%%%%%%%%%%%%%%%%%%%%%%%%%%%%%%%%%%%%%%%%%%%%%%%%%%%

\subsection{Completion via gluing\label{subsec:comviagluing}}

Assuming $S\subset\Int Z$
we define smooth manifolds
\[
\hat{C}:=(C\setminus\Int D)\cup_{\varphi}M\;,
\qquad
\hat{Z}:=(Z\setminus\Int D)\cup_{\varphi}M
\]
by gluing via $\varphi$
and equip both with the contact form
\[
\hat{\alpha}:=\alpha_0\cup_{\varphi}\alpha
\]
that coincides with $\alpha$ on $M$
and with $\alpha_0$ on $C\setminus\Int D$.
Because of the contact embedding
$\varphi$ of $U\supset\partial M$
into $(Z,\alpha_0)$ this is well defined. 
According to the second shape condition
in Section \ref{subsec:fibreshape}
the gluing does not create
additional periodic Reeb orbits
inside $(\hat{C},\hat{\alpha})$
so that $\inf_0(\alpha)$ and $\inf_0(\hat{\alpha})$
coincide.

%%%%%%%%%%%%%%%%%%%%%%%%%%%%%%%%%%%%%%%%%%%%%%%%%%%%%

\subsection{Filling by holomorphic discs\label{subsec:filling}}

In order to prove Theorem \ref{thm:mainthm}
we will argue as in \cite{gz16b,bschz19}:
The Liouville manifold
\[
\Big(
T^*Q\times\D\times\C^{n-1-d},\,
\lambda
+\frac12\big(x_0\rmd y_0-y_0\rmd x_0\big)
-\bfy\rmd\bfx
\Big)
\]
is foliated by holomorphic discs
$\{\bfw\}\times\D\times\{\bfs+\rmi\bft\}$.
Using the Niederkr\"uger transformation
from Section \ref{subsec:symplectisation}
these discs can be lifted to
holomorphic discs in the
symplectisation of the contactisation $(Z,\alpha_0)$
and are called {\bf standard discs}. 
After gluing some of the standard discs will survive,
namely those which correspond to the end of $(\hat{Z},\hat{\alpha})$
in the symplectisation $(W,\omega)$ of $(\hat{Z},\hat{\alpha})$.
We will study the corresponding moduli space $\WW$
of holomorphic discs
\[
u=(a,f)\co\D\lra W
\]
subject to varying Lagrangian boundary conditions,
which will differ dramatically
from those used in \cite{gz16b,bschz19}.
The novelty lies in a new $C^0$-bound argument,
which allows a wider class of base manifolds $Q$.

It will turn out that the evaluation map
\[
 \begin{array}{rccc}
  \ev\co & \WW\times\D     & \lra & \hat{Z}\\
            & \bigl( (a,f),z\bigr) & \longmapsto     & f(z)
 \end{array}
\]
either is proper of degree one
or there will be breaking off of finite energy planes.
The first alternative allows
conclusions on the diffeomorphism type of $M$
with the $s$-cobordism theorem
as in \cite{bgz19}.
The second results in the existence of a {\bf short}
contractible periodic Reeb orbit of $\alpha$ on $M$
by a result of Hofer \cite{hof93}.
Short here means
that the action of the Reeb orbit
is bounded by the area of $\D$.

The condition $\inf_0(\alpha)\geq\pi$
will exclude breaking of holomorphic discs
along periodic Reeb orbits
of action less than $\pi$.
But in fact,
under the assumptions of
Theorem \ref{thm:mainthm}
the shape $S$ of $M$
actually is contained in
$\R\times T^*Q\times B_r(0)\times\C^{n-1-d}$
for $r\in(0,1)$.
Working out the proof of Theorem \ref{thm:mainthm}
with that slightly smaller radius $r$
we will see that requiring non-existence
of short periodic Reeb orbits with period bounded by $\pi r^2$ 
will be sufficient. 
In other words,
we can assume that $\inf_0(\alpha)>\pi r^2$
in order to prove properness of the evaluation map $\ev$.
To simplify notation we will assume $r=1$,
i.e.\ from now on we assume $\inf_0(\alpha)>\pi$.

%%%%%%%%%%%%%%%%%%%%%%%%%%%%%%%%%%%%%%%%%%%%%%%%%%%%%%%%%%%%%%%%%%%%%%

\section{Standard holomorphic discs\label{sec:stholdiscs}}

In this section we construct standard holomorphic discs.
We will follow \cite[Section 2]{gz16b} and \cite[Section 2]{bschz19}
adding adjustments to the current situation.

%%%%%%%%%%%%%%%%%%%%%%%%%%%%%%%%%%%%%%%%%%%%%%%%%%%%%

\subsection{The contactisation\label{subsec:contactisation}}

We consider the Liouville manifold
\[
  (V,\lambda_V):=
  \Big(
  T^*Q\times\D\times\C^{n-1-d},\,
  \lambda
  +\frac12\big(x_0\rmd y_0-y_0\rmd x_0\big)
  -\bfy\rmd\bfx
  \Big)
  \;,
\]
whose contactisation
$(\R\times V,\rmd b+\lambda_V)$
is $(Z,\alpha_0)$.
The induced contact structure $\xi_0=\ker\alpha_0$ on $Z$
is spanned by tangent vectors of the form
$v-\lambda_V(v)\partial_b$, $v\in TV$.

%%%%%%%%%%%%%%%%%%%%%%%%%%%%%%%%%%%%%%%%%%%%%%%%%%%%%

\subsection{Liouville manifold and potential\label{subsec:liouandpot}}

Denote by $J_{T^*Q}$
the almost complex structure on $T^*Q$
that is compatible with $\rmd\lambda$
and satisfies
$\lambda=-\rmd F\circ J_{T^*Q}$.
Here $F$ is a strictly plurisubharmonic potential
in the sense of \cite[Section 3.1]{gz12}
that coincides with the kinetic energy function
near the zero section of $T^*Q$
and interpolates to the length function
on the complement of a certain disc bundle in $T^*Q$,
see \cite[Section 3.1]{meop}.
In Section \ref{sec:potentialsont*q}
we will present a construction of $(F,J_{T^*Q})$.

Define an almost complex structure
on the Liouville manifold $(V,\lambda_V)$
by setting
\[
J_V:=J_{T^*Q}\oplus\rmi\oplus\rmi
\;.
\]
$J_V$ is compatible with the symplectic form $\rmd\lambda_V$
and satisfies $\lambda_V=-\rmd\psi\circ J_V$,
where $\psi$ is the strictly plurisubharmonic potential
\[
\psi(\bfw,z_0,\bfz):=
F(\bfw)+
\frac14|z_0|^2+
\frac12|\bfy|^2
\]
denoting by $\bfw\in T^*Q$ a co-vector of $Q$,
$z_0\in\D$ and
using complex coordinates
$z_j=x_j+\rmi y_j$, $j=1,\ldots,n-1-d$ on $\C^{n-1-d}$.
Again the tuple $(z_1,\ldots,z_{n-1-d})$
is abbreviated by $\bfz$
so that $\frac12|\bfy|^2$ reads as
\[
\frac12\sum_{j=1}^{n-1-d}y_j^2\;.
\]
In particular,
$(V,J_V)$ is foliated by holomorphic discs
$\{\bfw\}\times\D\times\{\bfs+\rmi\bft\}$.

%%%%%%%%%%%%%%%%%%%%%%%%%%%%%%%%%%%%%%%%%%%%%%%%%%%%%

\subsection{The symplectisation\label{subsec:symplectisation}}

Let $\tau\equiv\tau(a)$ be a
strictly increasing smooth function
$\R\ra(0,\infty)$.
We consider the symplectisation
\[
\big(\R\times Z, \rmd(\tau\alpha_0)\big)
\]
of $(Z,\alpha_0)$.
Define a compatible, translation invariant
almost complex structure $J$ 
that preserves the contact hyperplanes $\xi_0$
on all slices $\{a\}\times Z$
by requiring that $J(\partial_a)=\partial_b$
and that
\[
J\big(v-\lambda_V(v)\partial_b\big)
=
J_Vv-\lambda_V(J_Vv)\partial_b
\]
for all $v\in TV$.
The {\bf Niederkr\"uger map}
is the biholomorphism
\[
 \begin{array}{rccc}
  \Phi\co & (\R\times\R\times V,J)     & \lra & (\C\times V,\rmi\oplus J_V)\\
            & (a,b\,,\bfz) & \longmapsto     & \big(a-\psi(\bfz)+\rmi b,\bfz\big)
 \end{array}
\]
recalling that $Z=\R\times V$,
see \cite[Proposition 5]{nie06} and \cite[Proposition 2.1]{gz16b}.

%%%%%%%%%%%%%%%%%%%%%%%%%%%%%%%%%%%%%%%%%%%%%%%%%%%%%

\subsection{The Niederkr\"uger transform\label{subsec:niedtrans}}

Using the inverse of $\Phi$ we lift
the holomorphic discs
\[
\{a+\rmi b\}\times\{\bfw\}\times\D\times\{\bfs+\rmi\bft\}
\]
from $(\C\times V,\rmi\oplus J_V)$
to the symplectisation $(\R\times\R\times V,J)$
of $(Z,\alpha_0)$.
For fixed $b\in\R$, $\bfw\in T^*Q$,
and $\bfs,\bft\in\R^{n-1-d}$,
the resulting {\bf standard holomorphic discs}
\[
\D\lra
\R\times\R\times T^*Q\times\D\times\C^{n-1-d}
\]
are parametrised by
\[
u_{\bfs,b}^{\bft,\bfw}(z)=
\Big(
\tfrac14\big(|z|^2-1\big),b\,,\bfw,z,\bfs+\rmi\bft
\Big)\;,
\]
cf.\ \cite[Section 2.2]{gz16b}.

In order to set boundary conditions
for the standard discs we define
a $(n-1)$-dimensional family of cylinders
\[
L^{\bft}_q:=
\{0\}\times\R\times T^*_qQ\times\partial\D
\times\R^{n-1-d}\times\{\bft\}
\;,
\]
where $\bft\in\R^{n-1-d}$ and $q\in Q$
are the parameters. 
Observe,
that the $L^{\bft}_q$ foliate $\{0\}\times\partial Z$.
Furthermore
the restriction of $\rmd(\tau\alpha_0)$
to the tangent bundle of $\{0\}\times Z$ equals
$\tau(0)\rmd\alpha_0$,
which is a positive multiple of
\[
\rmd\lambda
+\rmd x_0\wedge\rmd y_0
+\rmd\bfx\wedge\rmd\bfy
\;.
\]
Therefore,
$L^{\bft}_q$ is a Lagrangian cylinder
because the dimension of $L^{\bft}_q$ is $n+1$.

%%%%%%%%%%%%%%%%%%%%%%%%%%%%%%%%%%%%%%%%%%%%%%%%%%%%%

\subsection{Class independence\label{subsec:classindy}}

Preparing the definition
of the moduli space $\WW$
we consider the space
$\R\times T^*_q Q\times\R^{n-1-d}$
of tuples $(b,\bfw,\bfs)$.
Assuming $n\geq2$ this space is at least $2$-dimensional,
so that the complement
of any ball in
$\R\times T^*_q Q\times\R^{n-1-d}$
is path-connected.
Therefore,
we find $R>0$ such that
\begin{enumerate}
  \item
  the shape $S$ is contained
  in the closed disc bundle
  $D_R\big(T^*Q\oplus\underline{\R}^{2n+1-2d}\big)$
  of radius $R$, and
  \item
  all standard discs
  $u_{\bfs,b}^{\bft,\bfw}$ of level $(q,\bft)$, $\bfw\in T^*_qQ$,
  that are contained in
  \[
    \R\times
      \Big(
        Z\setminus D_R\big(T^*Q\oplus\underline{\R}^{2n+1-2d}\big)
      \Big)
  \]
  are homotopic therein relative $L^{\bft}_q$ via a homotopy inside
  \[
    \{0\}\times\R\times
    T^*_qQ\times\D\times
    \R^{n-1-d}\times\{\bft\}
    \;.
  \]
\end{enumerate}

%%%%%%%%%%%%%%%%%%%%%%%%%%%%%%%%%%%%%%%%%%%%%%%%%%%%%

\section{Symplectic potentials on cotangent bundles\label{sec:potentialsont*q}}

We prepare the proof of geometric bounds
on holomorphic discs
that belong to the moduli space $\WW$.
The aim of this section is to
construct an almost complex structure
on $T^*Q$.

The almost complex structure
on $T^*Q$ that belongs to the Levi-Civita connection of $Q$
is the one that is induced by the kinetic energy function.
The one coming from symplectising
the unit cotangent bundle in contrast belongs to the length functional
and does not extend over the zero section.
Here we want to interpolate the two in order
to obtain $C^0$-bounds on holomorphic curves
in the complement of the unit codisc bundle
that we after all can identify
with the positive symplectisation also holomorphically.

%%%%%%%%%%%%%%%%%%%%%%%%%%%%%%%%%%%%%%%%%%%%%%%%%%%%%

\subsection{Dual connection\label{subsec:dualcon}}

We denote the covariant derivative
of the Levi-Civita connection of $Q$ by $\nabla$.
The corresponding covariant derivative $\nabla^*$
of the dual connection is defined via chain rule by
\[
\big(\nabla^*\beta\big)(X,Y)
:=\big(\nabla^*_X\beta\big)(Y)
:=X\big(\beta(Y)\big)-\beta(\nabla_X Y)
\]
for $1$-forms $\beta$ and
vector fields $X,Y$ on $Q$,
cf.\ \cite[Section 4]{bwz}.
Denoting the Christoffel symbols of $\nabla$
by $\Gamma_{ij}^k$
the Christoffel symbols $(\Gamma^*)_{ij}^k$ of $\nabla^*$
can be expressed by
$(\Gamma^*)_{ij}^k=-\Gamma_{ik}^j$.
The connection map
of the dual connection
$K\co TT^*Q\ra T^*Q$
and the tangent functor $T$ are related via
$K\circ T=\nabla^*$
and defines a splitting of
\[
TT^*Q=\HH\oplus\VV
\]
into horizontal
\[
\HH:=\ker\big(K\co TT^*Q\lra T^*Q\big)
\]
and vertical distribution
\[
\VV=\ker\big(T\tau\co TT^*Q\lra TQ)
\;,
\]
where $T\tau$ is the linearisation of the cotangent map
$\tau\co T^*Q\ra Q$.
Observe that 
$T\tau$ defines a bundle isomorphism from $\HH$ onto $\tau^*TQ$
and that $\VV$ can be identified with $\tau^*T^*Q$ canonically.

%%%%%%%%%%%%%%%%%%%%%%%%%%%%%%%%%%%%%%%%%%%%%%%%%%%%%

\subsection{Orthogonal splitting\label{subsec:orthosplitt}}

Denoting the metric of $Q$ by $g$,
contraction defines a bundle isomorphism
\[
 \begin{array}{rccc}
  G: & TQ  & \lra & T^*Q\\
            & v & \longmapsto  & i_vg\;.
 \end{array}
\]
The dual metric $g^{\flat}$
is defined by
\[
g^{\flat}(\alpha,\beta)=g\big(G^{-1}(\alpha),G^{-1}(\beta)\big)
\]
for co-vectors $\alpha,\beta\in T^*Q$ on $Q$,
so that the dual norm $\alpha\mapsto |\alpha|_{\flat}$
defines the {\bf length function} on $T^*Q$.
The {\bf kinetic energy function} reads as
\[
k(\beta)=\frac12|\beta|_{\flat}^2
\;.
\]
For a smooth,
strictly increasing function $\chi\co\R\ra\R$
with $\chi(0)=0$ we define
\[
F=\chi\circ k\co T^*Q\ra[0,\infty)
\;.
\]
This leads to a Riemannian metric $h$ on $T^*Q$ defined by
\[
h\big(v\oplus\alpha,w\oplus\beta\big)
:=
\frac{1}{\chi'\circ k}\cdot g\Big(T\tau(v),T\tau(w)\Big)+
(\chi'\circ k)\cdot g^{\flat}(\alpha,\beta)\;,
\]
where $v,w\in\HH$ and $\alpha,\beta\in\VV$. 
The metric $h$ turns $TT^*Q=\HH\oplus\VV$
into an orthogonal splitting.

%%%%%%%%%%%%%%%%%%%%%%%%%%%%%%%%%%%%%%%%%%%%%%%%%%%%%

\subsection{Taming structure\label{subsec:tamestruc}}

The Liouville form $\lambda$ on $T^*Q$
is given by
$\lambda_{\bfw}=\bfw\circ T\tau$ for $\bfw\in T^*Q$
and defines a symplectic form via $\rmd\lambda$.
Observe that
for $v,w\in\HH$ and $\alpha,\beta\in\VV$
\[
\lambda_u\big(v\oplus \alpha\big)
=\bfw\big(T\tau (v)\big)
\]
and
\[
\rmd\lambda\big(v\oplus\alpha,w\oplus\beta\big)
=
\alpha\big(T\tau(w)\big)-\beta\big(T\tau(v)\big)
\;.
\]
In view of the splitting
$TT^*Q=\HH\oplus\VV$
we define the almost complex structure
$J_{T^*Q}$ by setting
\[
J_{T^*Q}\big(v\oplus\alpha\big)
:=
(\chi'\circ k)\cdot G^{-1}(\alpha)
\oplus
\frac{-1}{\chi'\circ k}\cdot G(v)
\]
for $v\in\HH$ and $\alpha\in\VV$. 
This yields
\[
h=\rmd\lambda\big(\;.\;,J_{T^*Q}\;.\;\big)
\;,
\]
i.e.\ $J_{T^*Q}$ is compatible with the symplectic form $\rmd\lambda$.
Non-degeneracy of the metric $h$ and the symplectic form $\rmd\lambda$
shows that the almost complex structure $J_{T^*Q}$ is uniquely determined.

%%%%%%%%%%%%%%%%%%%%%%%%%%%%%%%%%%%%%%%%%%%%%%%%%%%%%

\subsection{Potentials\label{subsec:potentials}}

We claim that the function $F$
is a symplectic potential on
the tame symplectic manifold
$(T^*Q,\rmd\lambda,J_{T^*Q})$
in the sense that
\[
\lambda=
-\rmd F\circ J_{T^*Q}
\;.
\]
Indeed,
in local $(\bfq,\bfp)$-coordinates on $T^*Q$
induced by Riemann coordinates on $Q$ about
$\bfq\equiv\mathbf{0}$
we have
\[
\HH_{(\mathbf{0},\bfp)}=
\left\{
  \big(\mathbf{0},\bfp,\dot{\bfq},\mathbf{0}\big)
  \mid
  \dot{\bfq}\in \R^d
\right\}
\;,
\quad
\VV_{(\mathbf{0},\bfp)}=
\left\{
  \big(\mathbf{0},\bfp,\mathbf{0},\dot{\bfp}\big)
  \mid
  \dot{\bfp}\in \R^d
\right\}
\;,
\]
as well as
\[
\lambda_{(\mathbf{0},\bfp)}=\bfp\,\rmd\bfq
\;,
\quad
\rmd\lambda_{(\mathbf{0},\bfp)}=\rmd\bfp\wedge\rmd\bfq
\;,
\]
and
\[
\big(J_{T^*Q}\big)_{(\mathbf{0},\bfp)}=
\begin{pmatrix}
  0&{\chi'\big(\tfrac12p^ip^i\big)}\\
  -\Big(\chi'\big(\tfrac12p^ip^i\big)\Big)^{-1}&0
\end{pmatrix}
\]
using block matrix notation
and writing e.g.\
${\chi'\big(\tfrac12p^ip^i\big)}$
instead of
${\chi'\big(\tfrac12p^ip^i\big)}\mathbbm{1}$.
Because of
\[
\rmd F|_{(\mathbf{0},\bfp)}=
\chi'\big(\tfrac12p^ip^i\big)\cdot p^j\rmd p^j
\]
we get therefore
\[
-\rmd F\circ J_{T^*Q}|_{(\mathbf{0},\bfp)}
=p^j\rmd q^j|_{(\mathbf{0},\bfp)}
=\lambda_{(\mathbf{0},\bfp)}
\]
as claimed.

%%%%%%%%%%%%%%%%%%%%%%%%%%%%%%%%%%%%%%%%%%%%%%%%%%%%%

\subsection{Interpolating geodesic and normalised geodesic
flow\label{subsec:intgeonorgeoflow}}

We choose the strictly increasing function $\chi\co\R\ra\R$
from Section \ref{subsec:orthosplitt}
to satisfy $\chi(t)=t$ for $t\leq\tfrac14$
and $\chi(t)=\sqrt{2t}$ for $t\geq\tfrac12$
in order to interpolate the kinetic energy
with the length function.

We would like to understand
the interpolation given by $\chi$
in terms of symplectisation.
For that we consider the diffeomorphism
\[
 \begin{array}{rccc}
  \Phi:&\big(\R\times ST^*Q,\rme^a\alpha\big)&\lra&\big(T^*Q\setminus Q,\lambda\big)\\
            &(a,\bfw)&\longmapsto&\rme^a\bfw
 \end{array}
\]
of Liouville manifolds,
where $\alpha:=\lambda|_{TST^*Q}$.
Observe, that
\[
\Phi^*F(a,\bfw)=
\chi\circ k\big(\rme^a\bfw\big)=
\chi\big(\tfrac12\rme^{2a}\big)
\]
equals $\rme^a$ for $a\geq 0$.
Since $\Phi$ is a symplectomorphism
$I:=\Phi^*J_{T^*Q}$ is a compatible
almost complex structure
on the symplectisation
$\big(\R\times ST^*Q,\rmd(\rme^a\alpha)\big)$.
Moreover,
on the positive part 
$\{a>0\}$ of the symplectisation,
where $\Phi^*F=e^a$,
we obtain $\Phi^*\rmd F=\rme^a\rmd a$. 
Therefore,
\[
\rme^a\alpha=
\Phi^*\lambda=
\Phi^*\big(-\rmd F\circ J_{T^*Q}\big)=
-\rme^a\rmd a\circ I
\;,
\]
which implies
\[
\alpha=-\rmd a\circ I
\;.
\]
Consequently,
$I$ preserves the contact structure
$\xi=\ker\alpha\cap\ker(\rmd a)$
induced by $\alpha$ on all slices.
Moreover,
denoting the Reeb vector field of $\alpha$ by $R$
we get
\[
1=\alpha(R)=-\rmd a(IR)
\;.
\]
Hence,
\[
I\partial_a=R
\;.
\]
We remark that $\partial_a$
is the Liouville vector field of
$\big(\R\times ST^*Q,\rme^a\alpha\big)$.
Therefore,
$\Phi_*\partial_a=Y$,
where $Y$ is the Liouville vector field
on $T^*Q$ determined by $\lambda=i_Y\rmd\lambda$.

We claim that the almost complex structure $I$
is invariant under translation in $\R$-direction
along $\R^+\times ST^*Q$.
Indeed,
using local Riemann coordinates
as in Section \ref{subsec:potentials}
the restriction of $J_{T^*Q}$ to
$\{|\bfp|_{\flat}>1\}$
is given by
\[
\big(J_{T^*Q}\big)_{(\mathbf{0},\bfp)}
=
\begin{pmatrix}
0&\tfrac{1}{|\bfp|}\\
-|\bfp|&0
\end{pmatrix}
\]
abbreviating e.g.\ $|\bfp|=|\bfp|_{\flat}\mathbbm{1}$.
As the flow of $Y$ scales by $\rme^t$ in $\bfp$-direction
the pullback of $J_{T^*Q}$
with respect to the flow of $Y=\bfp\partial_{\bfp}$
at $(\mathbf{0},\bfp)$ equals
\[
\begin{pmatrix}
  \mathbbm{1}&0\\
  0&\rme^{-t}
\end{pmatrix}
\begin{pmatrix}
  0&\frac{\rme^{-t}}{|\bfp|}\\
  -\rme^t|\bfp|&0
\end{pmatrix}
\begin{pmatrix}
  \mathbbm{1}&0\\
  0&\rme^t
\end{pmatrix}
=
\begin{pmatrix}
  0&\frac{1}{|\bfp|}\\
  -|\bfp|&0
\end{pmatrix}
\;.
\]
This shows
that the Lie derivative
$L_YJ_{T^*Q}$ vanishes.
Hence,
$\Phi_*\partial_a=Y$ impies
$L_{\partial_a}I=0$,
i.e.\ $I_{(a,\bfp)}=I_{(a+t,\bfp)}$ for all $a,a+t>0$.

In other words,
$I$ is a compatible almost complex structure on
the positive part of the symplectisation
$\big(\R^+\times ST^*Q,\rmd(\rme^a\alpha)\big)$.
$I$ is translation invariant,
preserves the contact structure $\xi=\ker\alpha$,
and sends the Liouville vector field $\partial_a$
to the Reeb vector field $R$ of $\alpha$.

%%%%%%%%%%%%%%%%%%%%%%%%%%%%%%%%%%%%%%%%%%%%%%%%%%%%%

\section{A boundary value problem\label{sec:abdryvalprob}}

Following 
\cite[Section 3]{gz16b} and \cite[Section 3]{bschz19}
we introduce the moduli space $\WW$
of holomorphic discs
in order to understand the topology
of the manifold $M$.
We consider the glued
strict contact manifold $(\hat{Z},\hat{\alpha})$
introduced in Section \ref{subsec:comviagluing}
and form its symplectisation
$(W,\omega)$,
i.e.\ we set
\[
(W,\omega):=
\Big(
  \R\times\hat{Z},
  \rmd(\tau\hat{\alpha})
\Big)
\]
for a positive,
strictly increasing smooth function
$\tau$ defined on $\R$
such that $\tau(a)=\rme^a$ for all $a\geq0$.
Compared to the constructions in
\cite{gz16b,bschz19}
there will be a substantial difference
in setting up the boundary conditions
for the holomorphic discs.

%%%%%%%%%%%%%%%%%%%%%%%%%%%%%%%%%%%%%%%%%%%%%%%%%%%%%

\subsection{An almost complex structure\label{subsec:analmcpxstr}}

We denote by $\hat{\xi}$ the contact structure
defined by $\hat{\alpha}$.
On the symplectisation $(W,\omega)$
we choose a compatible
almost complex structure $J$ 
that is $\R$-invariant,
sends $\partial_a$ to the Reeb vector field of $\hat{\alpha}$,
and restricts to a complex bundle structure on
$\big(\hat{\xi},\rmd\hat{\alpha}\big)$.

In order to incorporate standard holomorphic discs
we define the {\bf box} $B$ by
\[
B
:=[-b_0,b_0]
\times D_RT^*Q
\times D^2_r
\times D^{2n-2-2d}_R
\,,
\]
where $0<b_0,r\in(0,1),1\leq R$ are real numbers
chosen such that $S\subset\Int B$.
Here
$D^{2\ell}_{\rho}\subset\C^{\ell}$ denotes
the closed $2\ell$-disc of radius $\rho$
and $D_{\rho}T^*Q$ is the closed $\rho$-disc
subbundle of $T^*Q$.
Set
\[
\hat{B}:=(B\setminus\Int D)\cup_{\varphi}M\;.
\]
We require the almost complex structure $J$
to be the one defined in Section \ref{sec:stholdiscs}
on the complement of $\R\times\Int(\hat{B})$
in $\R\times\hat{Z}$.
On $\R\times\Int(\hat{B})$
we will choose $J$ generically,
see Section \ref{sec:transversality}.

%%%%%%%%%%%%%%%%%%%%%%%%%%%%%%%%%%%%%%%%%%%%%%%%%%%%%

\subsection{The moduli space\label{subsec:themodspace}}

The {\bf moduli space} $\WW$
is the set of all holomorphic discs
\[
u=(a,f)\co\D\lra (W,J)
\]
that satisfy the following conditions:
\begin{enumerate}
\item [(w${}_1$)]
  There exists a {\bf level} $(q,\bft)\in Q\times\R^{n-1-d}$
  such that
  \[u(\partial\D)\subset L^{\bft}_{q}\,.\]
\item [(w${}_2$)]
  There exist $b\in\R$, $\bfw\in T_q^*Q$, $\bfs\in\R^{n-1-d}$
  such that
  \[
  [u]=[u_{\bfs,b}^{\bft,\bfw}]
  \in H_2(W,L^{\bft}_{q})\,,
  \]
  where $(q,\bft)$ is the level of $u$.
\item [(w${}_3$)]
  $u$ maps the marked points $1,\rmi,-1$
  to the characteristic leaves
  $L^{\bft}_{q}\cap\{z_0=1\}$,
  $L^{\bft}_{q}\cap\{z_0=\rmi\}$,
  and $L^{\bft}_{q}\cap\{z_0=-1\}$,
  resp., i.e.\ for $k=0,1,2$ we have
  \[
  f(\rmi^k)\in
  \R\times T^*_qQ\times\{\rmi^k\}
  \times\R^{n-1-d}\times\{\bft\}
  \,.
  \]
\end{enumerate}

  The parameters $b,\bfw,\bfs$
  in condition (w${}_2$)
  are assumed to be sufficiently large
  so that the standard disc
  $u_{\bfs,b}^{\bft,\bfw}$
  defines a holomorphic disc in $(W,J)$.
  With Section \ref{subsec:classindy}
  the relative homology class of $u_{\bfs,b}^{\bft,\bfw}$
  is independent of the choice of $b,\bfw,\bfs$.

%%%%%%%%%%%%%%%%%%%%%%%%%%%%%%%%%%%%%%%%%%%%%%%%%%%%%

\subsection{Uniform energy bounds\label{subsec:energybounds}}

The {\bf symplectic energy}
$\int_{\D}u^*\omega$ is bounded by $\pi$
for all $u=(a,f)\in\WW$.
Indeed,
by Stokes theorem,
the symplectic energy of $u$
is equal to the {\bf action} $\int_{\partial\D}f^*\hat{\alpha}$
of the boundary circle.
This also holds for any
standard disc homologous to $u$.
The claim follows
as the symplectic energy is the same for all
holomorphic discs of the same level
and as the action
of the boundary circle of standard discs equals $\pi$.

By a similar argument we obtain
that the symplectic energy
of any non-constant holomorphic disc
that takes boundary values in some
Lagrangian cylinder $L^{\bft}_{q}$
is a positive multiple of $\pi$.

%%%%%%%%%%%%%%%%%%%%%%%%%%%%%%%%%%%%%%%%%%%%%%%%%%%%%

\subsection{Maximum principle\label{subsec:maxprinc}}

Let $u=(a,f)\in\WW$ be a holomorphic disc
of level $(q,\bft)$.
By \cite[Lemma 3.6.(i)]{gz16b}
the function $a$ is subharmonic
and, hence, $a<0$ on $\Int\D$.
 
The set
$G:=f^{-1}\big(\hat{Z}\setminus\hat{B}\big)$
is an open subset of $\D$ that contains 
a neighbourhood of $\partial\D$ in $\D$.
Restricting $f$ to $G$
we can write
\[
f=\big(b,\bfw,h_0,\bfh\big)
\]
w.r.t.\ coordinate functions on
$\R\times T^*Q\times\D\times\C^{n-1-d}$.
As the Niederkr\"uger map is biholomophic
the function $b$ is harmonic
and the maps $\bfw,h_0,\bfh$ are holomorphic,
see Section \ref{subsec:symplectisation}.

In particular,
if $G=\D$,
then $u$ is one of the discs
$u_{\bfs,b}^{\bft,\bfw}$.
This follows as in \cite[Lemma 3.7]{gz16b}.
Simply use the fact that a holomorphic map
$\bfw\co\D\ra T^*Q$
with boundary on $T^*_qQ$ is constant
by Stokes theorem and
$\bfw^*\lambda=0$ on $\partial\D$.

Motivated by this
we introduce the notion of
standard holomorphic discs
to the glued manifold $W$:

\begin{defn}
\label{nonstan}
 A holomorphic disc
 $u=(a,f)\in\WW$ is a called a {\bf standard disc}
 if $f(\D)\subset\hat{Z}\setminus\Int\hat{B}$.
 Holomorphic discs $u=(a,f)\in\WW$ with
 $f(\D)\cap\Int\hat{B}\neq\emptyset$
 are called {\bf non-standard}.
\end{defn}

Applying the strong maximum principle
and the boundary lemma by E.\ Hopf to $h_0$
we obtain as in \cite[Lemma 3.6.(ii)]{gz16b}
and on \cite[p.~669 and p.~671]{gz16b}:
\begin{enumerate}
\item $f(\Int\D)\subset\Int\hat{Z}$.
\item $u|_{\partial\D}$ is an embedding.
\end{enumerate}

\begin{rem}
\label{rem:posimm}
 In the situation $u$ is a non-constant
 holomorphic disc $(W,J)$
 that satisfies just the boundary condition
 $u(\partial\D)\subset L^{\bft}_{q}$
 the conclusions from this section
 that rely on the maximum principle
 continue to hold.
 The corresponding replacement of the statement in (2)
 which does not use the homological assumption
 is the following:
 $h_0$ restricts to an immersion on $\partial\D$
 so that $u(\partial\D)$ is positively transverse
 to each of the characteristic leaves
 $L^{\bft}_{q}\cap\{z_0=\rme^{\rmi\theta}\}$,
 $\theta\in[0,2\pi)$. 
\end{rem}

\begin{rem}
\label{rem:monotonicity}
The monotonicity argument
used in \cite[Lemma 3.9]{gz16b} implies
that there exists a compact ball $K\subset\C^{n-1-d}$
such that $\bfh(G)\subset K$
for all non-standard disc $u\in\WW$,
i.e.\ with $u=(a,f)$ we have
 \[
 f^{-1}
 \Big(
   \R\times
   T^*Q\times
   \D\times
   \big(\C^{n-1-d}\setminus K\big)
 \Big)=\emptyset
 \;.
 \]
\end{rem}

%%%%%%%%%%%%%%%%%%%%%%%%%%%%%%%%%%%%%%%%%%%%%%%%%%%%%

\subsection{Integrated maximum principle\label{subsec:inmaxprinc}}

Let $u=(a,f)\in\WW$
be a holomorphic disc of level $(q,\bft)$.
As in Section \ref{subsec:maxprinc}
we consider
$G:=f^{-1}\big(\hat{Z}\setminus\hat{B}\big)$
so that we can write
$f=\big(b,\bfw,h_0,\bfh\big)$
on $G$.
In Section \ref{subsec:maxprinc}
we obtained uniform $C^0$-bounds
on $h_0$ and $\bfh$ relying on
the maximum principle from
\cite{gz16b,bschz19}.
As the boundary conditions in $T^*Q$-direction
are considerably different form the one used in
\cite{bschz19}
uniform $C^0$-bounds on $\bfw$
require a new argument.

First of all we remark
that by Stokes theorem
the symplectic energy of $u$
(which we computed
in Section \ref{subsec:energybounds}
to be equal to $\pi$)
is equal to the area
$\int_{\D}f^*\rmd\hat{\alpha}$
of $f$.
Because $f^*\rmd\hat{\alpha}$ is an area density
by our compatibility assumptions
we obtain
\[
\int_G\bfw^*\rmd\lambda
\leq
\int_{G}f^*\rmd\alpha_0
\leq
\pi\;.
\]
Recall the diffeomorphism
$\Phi\co\big(\R\times ST^*Q,\rme^a\alpha)\ra
\big(T^*Q\setminus Q,\lambda\big)$
of Liouville manifolds
from Section \ref{subsec:intgeonorgeoflow},
which pulls $J_{T^*Q}$ back to $I$.
Define $v:=\Phi^{-1}\circ\bfw$
and replace $G$ by the subset
$(|\bfw|)^{-1}\big((R,\infty)\big)$,
$R\geq1$ appearing in the definition
of the box in Section \ref{subsec:analmcpxstr},
so that
\[
v=(c,k)\co G\lra(\ln R,\infty)\times ST^*Q
\]
is an $I$-holomorphic map subject to the following boundary conditions:
\[
c\big(\partial G\setminus\partial\D\big)=\{\ln R\}\;,
\quad
k\big(\partial\D\cap G\big)\subset ST^*_qQ\;.
\]
Further we have
\[
\int_G
v^*\rmd(\rme^a\alpha)
\leq
\pi
\]
for the symplectic energy of $v$.

We consider the subdomain
\[
G_t:=c^{-1}\big((t, \infty)\big)
\]
of $G$ for $t\geq\ln R$.
Note that $G_{\ln R}=G$.
In order to allow partial integration
we denote by $\RR$ the set of all
regular values $t\in(\ln R,\infty)$
of the functions $c$
and $c|_{\partial\D\cap G}$.
By Sard's theorem $\RR$ has full measure. 
Therefore, the open set $\RR$
is dense in $(\ln R,\infty)$.

For $t\in\RR$ the domain $G_t$
has piecewise smooth boundary
\[
\partial G_t=
\partial\D\cap G_t+
\partial G_t\setminus\partial\D
\;,
\]
which we equip with the boundary orientation.
Up to a null set the interior boundary
$\partial G_t\setminus\partial\D$
is given by $c^{-1}(t)$.
Observe that $ST^*_qQ$
is a Legendrian sphere in the unit cotangent bundle
so that the restrictions of $k^*\alpha$
to the tangent spaces of $\partial\D\cap G_t$ vanish.
Stokes theorem applied twice implies
\[
\int_{G_t}v^*\rmd(\rme^a\alpha)
=\rme^t\int_{c^{-1}(t)}k^*\alpha
=\rme^t\int_{G_t}k^*\rmd\alpha
\;,
\]
where we used
$v^*\rmd(\rme^a\alpha)=\rmd(\rme^ck^*\alpha)$.

On the other hand
using Leibniz rule
we have a decomposition
\[
v^*\rmd(\rme^a\alpha)
=
\rme^c\rmd c\wedge k^*\alpha
+
\rme^ck^*\rmd\alpha
\]
into energy densities.
Define the $\alpha$-{\bf energy} functional by
\[
e(t):=
\int_{G_t}\rme^c\rmd c\wedge k^*\alpha
\geq0
\;.
\]
Therefore,
\[
\int_{G_t}v^*\rmd(\rme^a\alpha)=
e(t)+\int_{G_t}\rme^ck^*\rmd\alpha\geq 
e(t)+\rme^t\int_{G_t}k^*\rmd\alpha 
\]
using $\rme^c\geq\rme^t$ on $G_t$.

Combining these expressions for the symplectic energy
we get $e(t)\leq0$.
Hence, $e(t)=0$ for all $t\in\RR$,
i.e.\ the $\alpha$-energy functional $e=e(t)$
vanishes identically.
Because of
\[
\rmd c\wedge f^*\alpha=
\big(c_x^2 + c_y^2\big)\,\rmd x\wedge\rmd y
\]
we deduce that $c|_{G_t}=\rm{const}$
and, since $k^*\alpha=-\rmd c\circ\rmi$,
that $k^*\alpha|_{G_t}=0$
as well as $k^*\rmd\alpha|_{G_t}=0$.
We conclude that 
$v|_{G_t}=\rm{const}$
for all $t\in(\ln R,\infty)$.
An open and closed argument
for $G=(|\bfw|)^{-1}\big((R,\infty)\big)$
implies that either
$G=\emptyset$ or $v=\rm{const}$
on all of $G=\D$,
which in turn implies that $u\in\WW$ was a standard disc.
This shows uniform $C^0$-bounds in $T^*Q$-direction
for all non-standard discs $u\in\WW$:

\begin{prop}
\label{unic0bouint*qdrt}
 If $u=(a,f)\in\WW$ is a non-standard holomorphic discs,
 then
 \[
 f^{-1}
 \Big(
   \R\times
   \big(T^*Q\setminus D_RT^*Q\big)\times
   \D\times
   \C^{n-1-d}
 \Big)=\emptyset
 \;.
 \]
\end{prop}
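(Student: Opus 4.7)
The plan is to carry out an integrated maximum principle in the $T^*Q$-direction, exploiting the fact that the interpolation constructed in Section \ref{sec:potentialsont*q} makes the cotangent component holomorphically equivalent to the positive symplectisation of the unit cotangent bundle, where the almost complex structure $I$ is translation invariant. The proposition, together with the $C^0$-bounds on $h_0$ and $\bfh$ from the classical maximum principle in Section \ref{subsec:maxprinc}, will then yield uniform bounds on non-standard discs.

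Concretely, given a non-standard disc $u=(a,f)\in\WW$ of level $(q,\bft)$, I would set $G:=(|\bfw|)^{-1}((R,\infty))\subset\D$ and push the cotangent component through $\Phi^{-1}$ to obtain an $I$-holomorphic map $v=(c,k)\co G\to(\ln R,\infty)\times ST^*Q$ with $c=\ln R$ on $\partial G\setminus\partial\D$ and $k(\partial\D\cap G)\subset ST^*_qQ$. The essential geometric input is that the cotangent fibre $ST^*_qQ$ is Legendrian in the unit cotangent bundle $(ST^*Q,\alpha)$, which forces $k^*\alpha$ to vanish on tangent vectors to $\partial\D\cap G$.

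For a regular value $t\in\RR$ of both $c$ and $c|_{\partial\D\cap G}$ (a dense full-measure set by Sard's theorem), I would compute the symplectic energy on $G_t:=c^{-1}((t,\infty))$ in two ways. On the one hand, $v^*\rmd(\rme^a\alpha)=\rmd(\rme^c k^*\alpha)$ combined with the Legendrian condition and two applications of Stokes gives $\rme^t\int_{G_t}k^*\rmd\alpha$. On the other hand, the Leibniz decomposition $v^*\rmd(\rme^a\alpha)=\rme^c\rmd c\wedge k^*\alpha+\rme^c k^*\rmd\alpha$, together with $\rme^c\geq\rme^t$ on $G_t$ and nonnegativity of the density $k^*\rmd\alpha$, bounds the energy below by $e(t)+\rme^t\int_{G_t}k^*\rmd\alpha$, where $e(t):=\int_{G_t}\rme^c\rmd c\wedge k^*\alpha\geq 0$. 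Comparing, $e(t)\leq 0$, hence $e(t)\equiv 0$. Since $\alpha=-\rmd a\circ I$ and $v$ is $I$-holomorphic, the pointwise identity $\rmd c\wedge k^*\alpha=(c_x^2+c_y^2)\,\rmd x\wedge\rmd y$ shows $c$ is locally constant on $G_t$, whence $k^*\alpha$ and $k^*\rmd\alpha$ vanish too, so $v$ is locally constant on $G_t$. An open-and-closed argument over all $t\in(\ln R,\infty)$ pushes this constancy to all of $G$, and finally to all of $\D$ if $G\neq\emptyset$; but a disc that is constant in the $T^*Q$-direction throughout $\D$ is a standard disc, contradicting the hypothesis. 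Therefore $G=\emptyset$, which is the claim.

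The main obstacle I expect is a careful bookkeeping with the piecewise smooth boundary $\partial G_t=(\partial\D\cap G_t)\cup(c^{-1}(t))$ in the double application of Stokes: one has to verify the boundary orientations, that the integrals along $\partial\D\cap G_t$ indeed drop out by the Legendrian property of $ST^*_qQ$, and that the conformal identity $\rmd c\wedge k^*\alpha=|\nabla c|^2\,\rmd x\wedge \rmd y$ used at the final step is a genuine consequence of $I$-holomorphicity together with $\alpha=-\rmd a\circ I$ rather than of some stronger positivity hypothesis. Once these ingredients are in place the proposition follows without further difficulty.
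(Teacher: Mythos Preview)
Your proposal is correct and follows essentially the same route as the paper: the same reduction via $\Phi^{-1}$ to an $I$-holomorphic map into the symplectisation, the same two Stokes computations on the super-level sets $G_t$, the same Leibniz decomposition defining the $\alpha$-energy $e(t)$, and the same pointwise identity $\rmd c\wedge k^*\alpha=|\nabla c|^2\,\rmd x\wedge\rmd y$ forcing $c$ (and then $v$) to be locally constant. The only cosmetic difference is in the closing step: rather than saying constancy ``pushes to all of $\D$'', the cleaner phrasing is that local constancy of $c$ on $G$ is incompatible with $c\to\ln R$ along the interior boundary $\partial G\setminus\partial\D$, so $G$ has no interior boundary, i.e.\ $G=\D$, which already forces $f(\D)\cap\hat B=\emptyset$ and hence $u$ standard.
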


%%%%%%%%%%%%%%%%%%%%%%%%%%%%%%%%%%%%%%%%%%%%%%%%%%%%%
%%%%%%%%%%%%%%%%%%%%%%%%%%%%%%%%%%%%%%%%%%%%%%%%%%%%%

\section{Compactness\label{sec:compactness}}

Consider a non-standard disc
$u=(a,f)\in\WW$ of level $(q,\bft)$.
On the preimage
$G:=f^{-1}\big(\hat{Z}\setminus\hat{B}\big)$
we write
\[
f=\big(b,\bfw,h_0,\bfh\big)
\]
w.r.t.\ to the decomposition
$\R\times T^*Q\times\D\times\C^{n-1-d}$.
In Section \ref{subsec:maxprinc} and
\ref{subsec:inmaxprinc}
we obtained uniform bounds on
\begin{enumerate}
  \item [(i)]
  $a$ from above by $0$,
  \item [(ii)]
  $h_0$ in the sense $|h_0|\leq1$,
  \item [(iii)]
  $\bfw$ and $\bfh$ in the sense that
  $|\bfw|_{\flat}$ and $|\bfh|$, resp.,
  are bounded by a geometric constant.
\end{enumerate}
The coordinate function $b$
completes to a holomorphic function
\[
a-
F(\bfw)-
\frac14|h_0|^2-
\frac12|\im\bfh\,|^2+
\rmi b
\]
on $G$,
where the restriction of the real part
to $\partial\D$ equals
$F(\bfw)|_{\partial\D}$
up to a constant. 
In \cite[Lemma 3.8]{gz16b},
where no $T^*Q$-component appears,
we used Schwarz reflection and
the maximum principle
to establish uniform bounds on $|b|$.
In our situation this would require real analyticity
of $F(\bfw)|_{\partial\D}$,
which in general does not hold.

We will work around this utilising a bubbling off analysis
that uses target rescaling along 
the Reeb vector field $\partial_b$ on
$\hat{Z}\setminus\hat{B}$.
This will require ideas from \cite{bwz}.
In fact,
by the elliptic nature of the holomorphic curves equation
the bubbling off analysis directly yields
compactness properties of holomorphic curves. 
Therefore,
we will combine the target rescaling in $b$-direction
with the usual target rescaling along 
the Liouville vector field $\partial_a$:

By the maximum principle
$|b|$ attains its maximum on $\partial G$.
Observe that because of
$f(\partial\D)\subset\hat{Z}\setminus\hat{B}$
the boundary of $G$ decomposes
\[
\partial G=\partial\D\sqcup f^{-1}(\partial\hat{B})
\;.
\]
Assuming $|b|\not\leq b_0$
we get therefore that $|b|$ attains its maximum
on $\partial\D$.

Suppose there exist sequences
$\zeta_{\nu} \in \D$ and 
$u_{\nu}=(a_{\nu},f_{\nu})\in\WW$
of non-standard such that
\[
|b_{\nu}(\zeta_{\nu})|\lra\infty
\]
writing $f_{\nu} =\big(b_{\nu},\bfw_{\nu},h_0^{\nu},\bfh_{\nu}\big)$.
We may assume that
$\zeta_{\nu}\in\partial\D$ for all $\nu$
and that $\zeta_{\nu}\ra\zeta_0$ in $\partial\D$.
By the mean value theorem
we find a sequence $z_{\nu}$ in $\D$
such that $|\nabla u_{\nu}(z_{\nu})|\ra\infty$.
This implies
that uniform gradient bounds
for non-standard holomorphic discs in $\WW$
result in uniform bounds on $b$.

\begin{prop}
\label{prop:bubboffinabdirc}
 Under the assumptions of Theorem \ref{thm:mainthm}
 each sequence of non-standard discs $u_\nu\in\WW$ 
 has a $C^\infty$-converging subsequence.
\end{prop}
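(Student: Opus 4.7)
The plan is to argue by contradiction. Suppose no subsequence of $\{u_\nu\}$ converges in $C^\infty_{\loc}$. By the uniform $C^0$-bounds on $a_\nu$, $h_0^\nu$, $\bfw_\nu$, and $\bfh_\nu$ from Sections \ref{subsec:maxprinc} and \ref{subsec:inmaxprinc}, together with the mean value observation recorded just above the proposition, any such failure produces a sequence of points $z_\nu\in\D$ at which the gradient $R_\nu:=|\nabla u_\nu(z_\nu)|$ tends to infinity. One may assume $z_\nu\to z_0\in\overline{\D}$.

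First I would apply Hofer's lemma to replace $z_\nu$ by a nearby point still having $R_\nu\to\infty$ and on which $|\nabla u_\nu|\leq 2R_\nu$ on a disc of radius $\varepsilon_\nu$ with $\varepsilon_\nu R_\nu\to\infty$. Then I would rescale the domain by $R_\nu$ and simultaneously translate in the target along both $\partial_a$ and $\partial_b$, so as to normalise $a_\nu(z_\nu)$ and $b_\nu(z_\nu)$ to zero. The point, following \cite{bwz}, is that on $\R\times(C\setminus\Int D)$, which is where the target translations far in the $b$-direction send us, the almost complex structure $J$ is invariant under both $\partial_a$- and $\partial_b$-flows, so the rescaled and translated maps $v_\nu$ remain $J$-holomorphic. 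Together with the $C^0$-bounds, which are preserved by these target translations, elliptic bootstrapping then yields a non-constant $C^\infty_{\loc}$-limit $v_\infty\co\C\to W$ (if $z_0\in\Int\D$) or $v_\infty\co\Hp\to W$ (if $z_0\in\partial\D$) of symplectic energy at most $\pi$.

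Finally I would rule out $v_\infty$. By Hofer's theorem the asymptotic behaviour at infinity is controlled by either a closed contractible periodic Reeb orbit of the limiting contact form or, in the half plane case, by a non-trivial Reeb chord on a translate of the Lagrangian cylinder $L^\bft_q$. If the rescaling was done far out in the $b$-direction, then the limit lives in the $\partial_b$-invariant end of $\hat Z$ where the Reeb vector field is $\partial_b$; composing with the Niederkr\"uger map as in Section \ref{subsec:niedtrans} turns the bubble into a non-constant finite energy holomorphic curve into $\C\times V$, and Stokes' theorem together with the exactness $\lambda_V|_{TL^\bft_q}$-component forces the bubble to be one of the standard discs, contradicting gradient blow-up. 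If instead $b_\nu(z_\nu)$ stayed bounded, then the asymptotic closed Reeb orbit sits in $\hat Z$ itself and is contractible because the bubble bounds it, so its action would be at most $\pi$, contradicting the standing hypothesis $\inf_0(\alpha)>\pi$.

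The main obstacle I anticipate is managing the boundary bubble case with the non-compact Lagrangian $L^\bft_q$. The new $\partial_b$-target rescaling is essential precisely because $L^\bft_q$ is $\partial_b$-invariant, so the boundary condition survives the translation; but one has to verify carefully that the integrated maximum principle from Section \ref{subsec:inmaxprinc} continues to bound the $T^*Q$-component of $v_\nu$ uniformly on compact subsets after translation, so that the limit $v_\infty$ is a genuine $J$-holomorphic curve in the relevant tame target and the dichotomy above can be run to completion.
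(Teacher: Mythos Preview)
Your overall architecture---Hofer's lemma, domain rescaling, and target translation in \emph{both} $\partial_a$- and $\partial_b$-directions using the invariance of $J$ on the complement of $\hat B$---is exactly the approach the paper takes, and your diagnosis of why the $b$-shift is needed (the Lagrangians $L^{\bft}_q$ are $\partial_b$-invariant) is correct. The gap is in how you exclude the resulting bubble.

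Your dichotomy ``asymptotic closed Reeb orbit (plane) or non-trivial Reeb chord (half-plane)'' is incomplete: a half-plane bubble with boundary on $L^{\bft}_q$ may have a \emph{removable} singularity at infinity and close up to a genuine holomorphic disc. This disc bubble is not covered by either branch of your argument. In the far-$b$ case you do arrive at such a disc and identify it as a standard disc, but then assert that this ``contradicts gradient blow-up''---it does not. A standard disc is non-constant and is a perfectly legitimate bubble; the rescaled limit having $|\nabla v_\infty(0)|=1$ is consistent with $v_\infty$ being a standard disc. In the bounded-$b$ case you ignore disc bubbles entirely.

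What the paper does instead is treat disc bubbles uniformly via energy quantisation and the three marked points. Any non-constant disc with boundary on $L^{\bft}_q$ has energy a positive multiple of $\pi$ (Section~\ref{subsec:energybounds}), so at most one disc can bubble off. After removing that single bubble point $z_0\in\partial\D$, the sequence $u_\nu$ converges on $\D\setminus\{z_0\}$ to a limit disc which, by condition (w${}_3$), still hits three distinct characteristic leaves and is therefore non-constant---hence also carries energy at least $\pi$. Since the total energy is exactly $\pi$, this is the contradiction. Your Reeb-chord branch should be replaced by this argument; once you do so, the remaining pieces (no sphere bubbles by exactness, no planes by $\inf_0(\hat\alpha)>\pi$) line up with the paper.
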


\begin{proof}
Consider a sequence of non-standard discs
$u_{\nu}=(a_{\nu},f_{\nu})\in\WW$
of level $(q_{\nu},\bft_{\nu})$
such that $|\nabla u_{\nu}(z_{\nu})|\ra\infty$
for a sequence $z_{\nu}\ra z_0$ in $\D$.
By compactness of $Q$ and
Remark \ref{rem:monotonicity}
we can assume that
$(q_{\nu},\bft_{\nu})\ra(q_0,\bft_0)$.
Observe that modifications
as made in \cite[Section 4.1]{gz16b}
that fix the varying boundary conditions
we will mention in Section \ref{subsec:varboundconc}
are not necessary for the following compactness argument. 

Up to a choice of a subsequence
we distinguish two cases:
\begin{enumerate}
\item
 $f_{\nu}(z_{\nu})\in\hat{Z}\setminus\hat{B}$ for all $\nu$,
 and
\item
 $f_{\nu}(z_{\nu})\in\hat{B}$ for all $\nu$.
\end{enumerate}

In the first case, additionally,
we can assume that the sequences
$\bfw_{\nu}(z_{\nu})$, $h_0^{\nu}(z_{\nu})$,
and $\bfh_{\nu}(z_{\nu})$ converge
and that either
\begin{enumerate}
\item[(1.1)]
 $b_{\nu}(z_{\nu})\ra\pm\infty$, or
\item[(1.2)]
 $b_{\nu}(z_{\nu})\ra b_{\infty}\in\R$.
\end{enumerate}
In case (1.1) we use bubbling off analysis
as in \cite[Section 6]{gz13},
but this time applied to
the holomorphic maps
\[
\big(
  a_{\nu}-a_{\nu}(z_{\nu}),
  b_{\nu}-b_{\nu}(z_{\nu}),
  \bfw_{\nu},h_0^{\nu},\bfh_{\nu}
\big)
\]
defined on
$G_{\nu}:=f_{\nu}^{-1}\big(\hat{Z}\setminus\hat{B}\big)$
for interior bubbling;
for bubbling along the boundary
perform the shift w.r.t.\ the real parts $x_{\nu}$
of the $z_{\nu}$.
For both observe
that shift in $b$-direction
is a strict contactomorphism
of $(Z,\alpha_0)$ and
does not effect the Hofer energy.
In order to have enough space inside $G_{\nu}$
during the domain rescaling use the trick
in \cite[Case 1.2.b]{gz13}
explained on \cite[p.~547]{gz13};
this time make use of the stretching
of the holomorphic discs $u_{\nu}$
in $b$-direction instead of the $a$-direction.
In the cases (2) and (1.2)
apply the usual bubbling off analysis
as in \cite{fr08,fz15,hof93,hof99},
cf.\ \cite[Cases 1.1, 1.2.a, 2 in Section 6]{gz13}.

Finally, in all cases
we can argue as in \cite[Section 4]{gz16b}.
By the aperiodicity assumption $\inf_0(\alpha)\geq\pi$,
which with Section \ref{subsec:comviagluing}
implies $\inf_0(\hat{\alpha})\geq\pi$,
there is no bubbling off of finite energy planes.
This is because finite energy planes asymptotically converge
to contractible periodic Reeb orbits.
The asymptotic analysis of the finite energy planes
possibly requires a bubbling off analysis
that involves target rescaling in $b$-direction
as explained above,
cf.\ \cite[Section 5.2]{bwz}.

Because there are no bubble spheres
by exactness of $(W,\omega)$
we are left with bubbling off of holomorphic discs,
cf.\ \cite[Section 5.3]{bwz}.
This will lead us to a contradiction as in
\cite[Section 4.2]{gz16b}.
Indeed, the Hofer energy of a bubble discs
is a positive multiple of $\pi$,
see Section \ref{subsec:energybounds}.
As the Hofer energy of all $u_{\nu}$ equals $\pi$
by Section \ref{subsec:energybounds}
there is at most one bubble discs.
Hence, we can assume that $u_{\nu}$
converge in $C^{\infty}_{\loc}$
on $\D\setminus\{z_0\}$ for some $z_0\in\partial\D$.
By our assumption on the $3$ fixed marked points
in the definition of $\WW$
after removing the singularity $z_0$
the limiting holomorphic disc will be non-constant;
and, therefore,
will also have energy equal to a positive multiple of $\pi$.
But the sum of energies
of the bubble disc and the limiting disc
can not exceed $\pi$.
This contradiction shows uniform gradient bounds
for any sequence $u_{\nu}$
of holomorphic discs in $\WW$.
\end{proof}

%%%%%%%%%%%%%%%%%%%%%%%%%%%%%%%%%%%%%%%%%%%%%%%%%%%%%

\section{Transversality\label{sec:transversality}}

In Section \ref{sec:compactness}
we established properness
of the evaluation map
\[
 \begin{array}{rccc}
  \ev\co & \WW\times\D     & \lra & \hat{Z}\\
            & \bigl(u=(a,f),z\bigr) & \longmapsto     & f(z)\;.
 \end{array}
\]
The aim of this section is to show
that $\ev$ has degree $1$.
We will follow the considerations from
\cite[Section 5]{gz16b} and \cite[Section 3.5]{bschz19}
and just indicate the adaptations to the present situation.

%%%%%%%%%%%%%%%%%%%%%%%%%%%%%%%%%%%%%%%%%%%%%%%%%%%%%

\subsection{Maslov index\label{subsec:masovindex2}}

For all $u\in\WW$ the Maslov index
of the bundle pair
\[
\big(u^*TW,(u|_{\partial\D})^*TL^{\bft}_q\big)
\]
equals $2$,
where $(\bft,q)$ is the level of $u$.
Indeed,
following \cite[Lemma 3.1]{gz16b},
by homotopy invariance it suffices
to show the claim for standard discs
\[
u(z)=u_{\bfs,b}^{\bft,\bfw}(z)=
\Big(
\tfrac14\big(|z|^2-1\big),b\,,\bfw,z,\bfs+\rmi\bft
\Big)\;,
\]
$\bfw\in T_qQ$,
assuming
$W=\R\times\R\times T^*Q\times\D\times\C^{n-1-d}$.
In particular,
$u^*TW\cong\underline{\C}^{n+1}$.
Moreover,
$(u|_{\partial \D})^*TL^{\bft}_q$
is isomorphic to
$\rmi\R\oplus\rmi\R^d\oplus\rme^{{\rmi\theta}}\R\oplus\R^{n-1-d}$
over $\rme^{{\rmi\theta}}\in\partial\D$.
Hence, the Maslov index equals 2 by normalisation.

%%%%%%%%%%%%%%%%%%%%%%%%%%%%%%%%%%%%%%%%%%%%%%%%%%%%%

\subsection{Simplicity\label{subsec:simplicity}}

First of all we remark
that the classes $[u]\in H_2(W,L^{\bft}_q)$,
$u\in\WW$, are $J$-indecomposable.
Otherwise, we would find a decomposition
\[
[u]=\sum_{j=1}^N m_j[v_j]
\]
in $H_2(W,L^{\bft}_q)$,
for simple holomorphic discs $v_j$ with boundary on $L^{\bft}_q$
and multiplicities $m_j\geq 1$.
Writing $v_j=(a_j,f_j)$ we get for the energy
\[
\pi=\sum_{j=1}^N m_j\int_{\partial\D}f_j^*\alpha_0
\;.
\]
Writing
$\big(b_j,\bfw_j,h_0^j,\bfx_j+\rmi\bft_j\big)$
for the restriction of $f_j|_{\partial\D}$
the left hand side reads as
\[
\sum_{j=1}^N m_j\int_{\partial \D}
  \Big[
    b_j^*\rmd b+
    \bfw_j^*\lambda+
    (h_0^j)^*\tfrac12\big(x_0\rmd y_0-y_0\rmd x_0\big)-
    (\bfx_j+\rmi\bft_j)^*(\bfy\rmd\bfx)
  \Big]
  \;.
\]
The first and last summand vanish
by exactness of the form we pull back to the circle $\partial\D$;
the second vanishes because $\bfw_j(\partial\D)\subset T^*_qQ$.
Hence,
writing $r_j$ for the winding number of $h_0^j|_{\partial\D}$,
which is positive for non-constant $h_0^j$
by the argument priniciple,
we get
\[
\pi=
\pi\cdot\sum_{j=1}^N m_jr_j\geq
N\cdot\pi
\;.
\]
We conclude that
$N=1$, $m_1=1$,
i.e.\ $[u]$ is $J$-indecomposable.

Consulting \cite[Lemma 3.4]{gz16b}
we see that $u$ must be simple.
Because $u|_{\partial\D}$ is an embedding,
see Section \ref{subsec:maxprinc},
we obtain as in
\cite[Lemma 3.5]{gz16b}
that the set of $f$-injective points
is open and dense in $\D$.

%%%%%%%%%%%%%%%%%%%%%%%%%%%%%%%%%%%%%%%%%%%%%%%%%%%%%

\subsection{Variable boundary conditions\label{subsec:varboundconc}}

There is a natural way
to identify the boundary conditions
\[
L^{\bft}_q=
\{0\}\times\R\times T^*_qQ\times\partial\D
\times\R^{n-1-d}\times\{\bft\}
\]
for the holomorphic discs in $\WW$.
Observe,
that the union of $L^{\bft}_q$ over all
parameters $\bft\in\R^{n-1-d}$ and $q\in Q$
equals
\[
\{0\}\times\partial\hat{Z}=
\{0\}\times\R\times T^*Q\times\partial\D\times\C^{n-1-d}
\]
so that flows induced by tangent vectors
$\bfv\in T_{\bft}\R^{n-1-d}$ and
$v\in T_qQ$
can be taken for the identifications:
Consider a chart $(\R^d,0)\ra (Q,q)$
of $Q$ about $q$
and extend $v$ to a vector field on $\R^d$
that has compact support and is constant near $0$.
The induced flow on $Q$ naturally lifts to
a fibre and Liouville form preserving flow on $T^*Q$,
see \cite[p.~92]{mcsa98}.
Similarly,
extend $\bfv\in T_{\bft}\R^{n-1-d}$
to a compactly supported vector field
on $\R^{n-1-d}$ that is constant near $\bft\in\R^{n-1-d}$.

We regard $(v,\bfv)$
as a vector field on
$\R\times\R\times T^*Q\times\partial\D\times\C^{n-1-d}$
cutting off $(v,\bfv)$ with a bump function
that has support on a small neighbourhood
of $\{0\}\times [-b_0,b_0]\times T^*Q\times \partial \D\times \C$
and equals $1$ on a smaller neighbourhood.
We denote the corresponding flow on $W$
by $\psi_t^{(v,\bfv)}$.
Given a level $(q_0,\bft_0)$
we find a neighbourhood $U$
of $(q_0,\bft_0)\in Q\times\R^{n-1-d}$
and a vector field $(v,\bfv)$
as above
such that the time-$1$ map $\psi_1^{(v,\bfv)}$
sends $L^{\bft_0}_{q_0}$ to
$\psi_1^{(v,\bfv)}(L^{\bft_0}_{q_0})=L^{\bft}_q$
for all $(q,\bft)\in U$.
Simply define $(v,\bfv)$ to be
$(q-q_0,\bft-\bft_0)$ on $U$.

%%%%%%%%%%%%%%%%%%%%%%%%%%%%%%%%%%%%%%%%%%%%%%%%%%%%%

\subsection{Admissible functions\label{subsec:admissfunc}}

Denote by $\BB$ the separable Banach manifold
consisting of all continuous maps
$u:(\D,\partial\D)\ra\big(W,\{0\}\times\hat{C}\big)$ 
of Sobolev class $W^{1,p}$, $p>2$,
that satisfy the conditions (w${}_1$) - (w${}_3$)
in the definition of the moduli space $\WW$,
see Section \ref{subsec:themodspace}.
The Banach manifold structure is given as follows:
The subset $\BB_q^{\bft}\subset\BB$
of all $u$ of level $(q,\bft)$
is a separable Banach manifold
whose tangent spaces are
\[
T_u\BB_q^{\bft}=
W^{1,p}\big(u^*TW,(u|_{\partial\D})^*TL^{\bft}_q\big)
\;.
\]
Consider 
the level projection map
$\BB\ra Q\times\R^{n-1-d}$
that assigns to all $u\in\BB$
the corresponding level $(q,\bft)$.
Using the identifying maps
the $\psi_1^{(v,\bfv)}$
from Section \ref{subsec:varboundconc}
these defines a locally trivial fibration
on the Banach manifold $\BB$
with fibres $\BB_q^{\bft}$.

%%%%%%%%%%%%%%%%%%%%%%%%%%%%%%%%%%%%%%%%%%%%%%%%%%%%%

\subsection{Linearised Cauchy--Riemann operator \label{subsec:lincrop}}

In particular,
\[
T_u\BB=T_u\BB_q^{\bft}\oplus
\big(T_qQ\oplus\R^{n-1-d}\big)
\]
so that the linearised Cauchy--Riemann operator
at $u\in\BB$ of level $(q,\bft)$
splits as
\[
D_u=D_u^{(q,\bft)}\oplus K_u
\;,
\]
where $D_u^{(q,\bft)}:=D_u|_{T_u\BB_q^{\bft}}$
is the linearised Cauchy--Riemann operator
in fibre direction and
$K_u:T_qQ\oplus\R^{n-1-d}\ra L^p(u^*TW)$
is a compact perturbation,
see \cite[Section 5.1]{gz16b}.
The index of $D_u^{(q,\bft)}$ equals $n$,
as the Maslov index of the problem
with fixed boundary level was $2$
(see Section \ref{subsec:masovindex2}),
so that the total index equals $\ind D_u=2n-1$.

If $Q$ is oriented
we can orient $D_u$ via the determinant bundle
\[
\det D_u=
\det D_u^{(q,\bft)}\otimes
\det\big(T_qQ\oplus\R^{n-1-d}\big)
\]
as follows:
The line bundle $\det D_u^{(q,\bft)}$ is oriented
by the construction in \cite[Section 8.1]{fooo09}
via the trivial bundle
$TL^{\bft}_q\cong T^*_qQ\oplus\underline{\R}^{n+1-d}$
and the orientation of $T^*_qQ\cong\R^d$
so that the bundle pair
\[
\big(u^*TW,(u|_{\partial\D})^*TL^{\bft}_q\big)
\]
admits a natural trivialisation.
The line bundle
$\det\big(T_qQ\oplus\R^{n-1-d}\big)$
is oriented via the orientation of $Q\times\R^{n-1-d}$.

%%%%%%%%%%%%%%%%%%%%%%%%%%%%%%%%%%%%%%%%%%%%%%%%%%%%%

\subsection{Lifting topology\label{subsec:lifttop}}

As in \cite[Section 5.2]{gz16b}
we choose $J$ to be regular
by perturbing the induced complex structure
on $\hat{\xi}$ over $\hat{B}$.
Regularity of $J$ along standard discs is obvious.
Hence,
the moduli space 
$\WW$ is a smooth oriented manifold
of dimension $2n-1$
whose end is made out of
standard holomorphic discs.
Therefore,
the evaluation map $\ev$,
which is proper, has degree $1$.
With \cite[Section 6]{gz16b} and \cite[Section 2]{bgz19}
we see that $\ev$ induces
surjections of homology groups and of $\pi_1$.

Identify $Q$ with the subset
\[
Q\equiv\{0\}\times Q\times\{1\}\times\{0\}
\]
of
\[
\R\times T^*Q\times\{1\}\times\C^{n-1-d}
\subset\partial\hat{Z}
\;.
\]
Observe that $M$ is a strong deformation retract of $\hat{Z}$.
We choose a deformation retraction
such that the inclusion $Q\subset\hat{Z}$
is isotoped to an embedding $Q\ra M$.
Combining this with the following commutative diagram
\begin{diagram}
\WW \times \D & 
	\rTo^{\qquad\qquad\ev} & 
	\hat{Z}\\
\uTo_\subset &
	&
	\uTo_\subset\\
\WW\times\{1\} & 
	\rTo^{\quad\quad\ev\quad\quad} &
	\R\times T^*Q\times\{1\}\times\C^{n-1-d}\\
\end{diagram}
yields:

\begin{prop}
\label{prop:epimorphic}
 Under the assumptions of Theorem \ref{thm:mainthm}
 the isotoped inclusion $Q\ra M$
 induces a surjection of
 homology and fundamental groups.
\end{prop}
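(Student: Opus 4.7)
The plan is to combine the degree-one property of the evaluation map $\ev$ with the commutative diagram in order to transfer the surjectivity statement from $\ev$ to the inclusion $Q \hookrightarrow M$.

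First I would invoke the results of Sections \ref{sec:compactness} and \ref{sec:transversality}: the moduli space $\WW$ is a smooth oriented manifold of dimension $2n-1$, so that $\WW\times\D$ has dimension $2n+1=\dim\hat{Z}$, and the evaluation map $\ev\co\WW\times\D\to\hat{Z}$ is proper with end made of standard discs. Counting preimages of a regular value inside (say) a standard disc foliation shows $\deg(\ev)=1$. A proper map of degree one between connected oriented manifolds induces a surjection on singular homology, by naturality of the fundamental class under proper pushforward, and a surjection on fundamental groups (cf.\ \cite[Section 6]{gz16b} and \cite[Section 2]{bgz19}); I would quote these references rather than reprove the standard argument.

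Next I would exploit the commutative diagram to localise the image of $\ev$. Since $\D$ is contractible, the inclusion $\WW\times\{1\}\hookrightarrow \WW\times\D$ is a homotopy equivalence, so the restriction
\[
\ev|_{\WW\times\{1\}}\co \WW\times\{1\}\lra \hat{Z}
\]
induces the same surjections on $\pi_1$ and $H_*$. By condition (w${}_3$) in Section \ref{subsec:themodspace}, this restriction factors through the subset
\[
\R\times T^*Q\times\{1\}\times\C^{n-1-d}\subset\partial\hat{Z}\subset\hat{Z},
\]
so the inclusion of that subset into $\hat{Z}$ must already be surjective on $\pi_1$ and $H_*$. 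Now $\R\times T^*Q\times\{1\}\times\C^{n-1-d}$ deformation retracts onto $Q\equiv\{0\}\times Q\times\{1\}\times\{0\}$, so the inclusion $Q\hookrightarrow\hat{Z}$ induces surjections on fundamental groups and homology.

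Finally I would translate from $\hat{Z}$ to $M$. The manifold $M$ is a strong deformation retract of $\hat{Z}$ since $\hat{Z}\setminus M$ is the union of positive rays of $\partial_b$-cylinders emanating from the shape $S$ inside $C\setminus\Int D$; I would choose the deformation retraction so that the initial inclusion $Q\subset\hat{Z}$ is carried through an ambient isotopy into the embedding $Q\to M$ given by a section $Q\to S$, as announced just before the proposition. Since the deformation retraction is a homotopy equivalence, the induced map on $\pi_1$ and $H_*$ is an isomorphism, and composition with the surjection $Q\hookrightarrow\hat{Z}$ yields the desired surjection induced by $Q\to M$. The main obstacle is the bookkeeping for the retraction in the last step, in particular checking that the isotoped inclusion coincides (up to homotopy) with one arising from a section $Q\to S$; this is the same argument used in \cite{gz16b,bschz19} and will be carried out at the beginning of Section \ref{sec:homotopytype}.
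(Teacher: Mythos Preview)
Your proposal is correct and follows essentially the same route as the paper: use that the proper degree-$1$ evaluation map induces surjections on $H_*$ and $\pi_1$, factor $\ev$ up to homotopy through $Q\equiv\{0\}\times Q\times\{1\}\times\{0\}$ via the commutative diagram and the obvious retractions, and then pass from $\hat{Z}$ to $M$ by the strong deformation retraction. The paper's own proof merely cites the homology epimorphism argument of \cite[Section~2.3]{bgz19} and the covering argument of \cite[Section~2.5]{bgz19} for these two steps; you have simply spelled them out.
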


\begin{proof}
 This follows with the homology epimorphism argument
 from \cite[Section 2.3]{bgz19}
 and the covering argument from \cite[Section 2.5]{bgz19}.
\end{proof}

%%%%%%%%%%%%%%%%%%%%%%%%%%%%%%%%%%%%%%%%%%%%%%%%%%%%%%%%%%%%%%%%%%%%%%

\section{The homotopy type\label{sec:homotopytype}}

We compute the homotopy type of $M$ in terms of
$D\big(T^*Q\oplus\underline{\R}^{2n+1-2d}\big)$.
For that we assume that,
up to fibre preserving isotopy,
the shape $S$ is equal to the shape given by
the unit sphere bundle in
$T^*Q\oplus\underline{\R}^{2n+1-2d}$.
This results into the same construction for $\hat{Z}$
as in Section \ref{subsec:comviagluing}
up to ambient diffeotopy.

We identify $Q$ with the section of the sphere bundle
\[
\partial M=
S\big(T^*Q\oplus\underline{\R}^{2n+1-2d}\big)
\]
given by
\[
Q\equiv\{0\}\times Q\times\{1\}\times\{0\}
\]
in
\[
\R\times T^*Q\times\D\times\C^{n-1-d}
\;.
\]
Observe that this defines a natural embedding of
$D\big(T^*Q\oplus\underline{\R}^{2n+1-2d}\big)$
into $M$ via a small disc bundle about
\[
\{0\}\times Q\times\{(1-\varepsilon)\}\times\{0\}
\;,
\]
$\varepsilon>0$ small.
Indeed,
simply shift a small disc bundle in
$\R\times T^*Q\times\D\times\C^{n-1-d}$
in direction of
$\{0\}\times Q\times\{(1-\varepsilon)\}\times\{0\}$.
The image is denoted by $M_0$.

%%%%%%%%%%%%%%%%%%%%%%%%%%%%%%%%%%%%%%%%%%%%%%%%%%%%%

\subsection{Homology type and fundamental group}
\label{subsec:homotypeanfundgru}

Proposition \ref{prop:epimorphic} implies
that the inclusion $Q\subset M$
is surjective in homology and $\pi_1$.
Based on that we show:

\begin{prop}
 \label{prop:homologytyp}
 Under the assumptions of Theorem \ref{thm:mainthm}
 the inclusion $M_0\subset M$
 induces isomorphisms of homology groups.
\end{prop}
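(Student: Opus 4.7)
The strategy is to reduce the proposition to a statement about the zero-section inclusion and then exploit the full strength of the degree-$1$ evaluation map. Since $M_0=D\big(T^*Q\oplus\underline{\R}^{2n+1-2d}\big)$ is a linear disc bundle over $Q$, the zero section $Q\hookrightarrow M_0$ is a smooth deformation retract; consequently it suffices to show that the composed inclusion $Q\hookrightarrow M_0\hookrightarrow M$ induces an isomorphism on homology. Surjectivity is exactly Proposition~\ref{prop:epimorphic}, so the task reduces to establishing injectivity.

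For injectivity, I would invoke a Gysin/Umkehr argument for the proper degree-$1$ evaluation map $\ev\co\WW\times\D\to\hat Z$ of Section~\ref{subsec:lifttop}. The first step is to identify the homotopy type of $\WW$: the level map $\WW\to Q\times\R^{n-1-d}$ has fibers that, along the standard-disc end, are the affine spaces parameterised by $(b,\bfw,\bfs)\in\R\times T^*_qQ\times\R^{n-1-d}$. Combining the target-rescaling compactness of Proposition~\ref{prop:bubboffinabdirc} with the regularity furnished by Section~\ref{sec:transversality}, I would extend this structure globally to see that $\WW$ is a fiber bundle over $Q\times\R^{n-1-d}$ with contractible fibers, so $\WW\simeq Q$ and $\WW\times\D\simeq Q$. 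Under this identification and the deformation retraction $\hat Z\simeq M$, the commutative diagram at the end of Section~\ref{subsec:lifttop} identifies $\ev_*$ with the zero-section inclusion $H_*(Q)\to H_*(M)$.

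Since $\ev$ is proper and of degree $1$ between oriented $(2n+1)$-manifolds with boundary that sends boundary to boundary, Poincar\'e--Lefschetz duality supplies Umkehr homomorphisms $\ev^!$ on both absolute and relative homology satisfying $\ev_*\circ\ev^!=\id$. This exhibits $H_*(Q)\to H_*(M)$ as a split surjection. To upgrade the split surjection to an isomorphism, I would compare the long exact sequences of the pairs $(M_0,\partial M_0)$ and $(M,\partial M)$ via the five-lemma: the shape identification of Section~\ref{subsec:standnearbound} furnishes a diffeomorphism $\partial M_0\cong\partial M$ compatible with the inclusions into $M_0$ and $M$, hence induces the identity on boundary homology, while the relative Umkehr produces a split surjection on the relative groups. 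Rank counting across the simultaneous absolute and relative splittings forces the kernel of the absolute split surjection to vanish, closing the argument. The principal obstacle is the rigorous verification that $\WW\simeq Q$---one must control non-standard discs using the compactness of Section~\ref{sec:compactness} to ensure the level fibration has contractible fibers globally---and, secondarily, tracking that the absolute and relative splittings interact compatibly under the connecting homomorphism so that the five-lemma actually applies.
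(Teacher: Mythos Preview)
Your proposal has a genuine gap, and the paper's approach avoids it entirely by a much simpler route.

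The central problem is your claim that $\WW\simeq Q$, i.e.\ that the level map $\WW\to Q\times\R^{n-1-d}$ has contractible fibres. You correctly flag this as ``the principal obstacle'', but you do not resolve it. The compactness result (Proposition~\ref{prop:bubboffinabdirc}) and transversality only tell you that $\WW$ is a smooth $(2n-1)$-manifold whose end consists of standard discs; they say nothing about the homotopy type of the fibres over the interior, where non-standard discs live. Proving contractibility of these fibres would amount to a classification of all holomorphic discs in the relevant relative class, which is far stronger than anything established in the paper and is precisely what the degree method is designed to circumvent. Without this, your identification of $\ev_*$ with the zero-section inclusion on homology collapses, and with it the Umkehr/five-lemma scheme.

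There is a second, smaller issue: the inclusion $M_0\subset M$ does not send $\partial M_0$ to $\partial M$ (the former lies in the interior of $M$), so there is no natural map of pairs $(M_0,\partial M_0)\to(M,\partial M)$ to which you can apply the five-lemma directly. The isotopy relating $\partial M_0$ and $\partial M$ is set up in Section~\ref{subsec:acobordism}, \emph{after} the present proposition, and uses it as input.

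The paper's argument is far more economical. From the surjectivity in Proposition~\ref{prop:epimorphic} one immediately gets $H_kM=0$ for $k\geq d+1$. Then Poincar\'e--Lefschetz duality and the universal coefficient theorem give
\[
H_k(M,\partial M)\cong H^{2n+1-k}M\cong FH_{2n+1-k}M\oplus TH_{2n-k}M=0
\quad\text{for }k\leq 2n-d-1,
\]
so the long exact sequence of $(M,\partial M)$ shows $\partial M\hookrightarrow M$ is a homology isomorphism in degrees $\leq 2n-2-d$ and an epimorphism in degree $2n-1-d$; since $n-1\geq d$ forces $2n-1-d\geq d+1$, the sphere-bundle $\partial M$ has vanishing homology there and the epimorphism is trivially injective. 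Finally, the section $Q\to\partial M$ is a homology isomorphism in degrees $\leq 2n-1-d$ by general position, and composing gives the result. No information about $\WW$ beyond $\deg(\ev)=1$ is needed.
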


\begin{proof}
 The arguments are similar to
 \cite[p.~42]{bschz19} and \cite[Section 2.4]{bgz19}.
 Recall the general assumption $n-1\geq d$.
 
 From Proposition \ref{prop:epimorphic}
 we immediately obtain
 $H_kM=0$ for $k\geq d+1$ so that
 the homology isomorphism property
 of the inclusion $M_0\subset M$
 is automatic in all degrees $k\geq d+1$.
 
 By general position, 
 any section $Q\ra\partial M$
 of the sphere bundle induces an isomorphism
 in homology in degree $k\leq 2n-1-d$.
 Therefore,
 the inclusion of 
 the sphere bundle into the disc bundle of
 $T^*Q\oplus\underline{\R}^{2n+1-2d}$ is
 isomorphic in homology of degree $k\leq 2n-1-d$.
 We claim that the inclusion
 $\partial M\ra M$ shares the same property.
 With $d+1\leq 2n-1-d$ the proposition will be immediate.

 By Poincar\'e duality and
 the universal coefficient theorem
 we have
 \[
 H_k(M,\partial M)\cong
 H^{2n+1-k}M\cong
 FH_{2n+1-k}M\oplus TH_{2n-k}M
 \;,
 \]
 where $FH_*$ and $TH_*$
 denote the free and the torsion part of $H_*$, respectively.
 By the above $H_k(M,\partial M)=0$
 for $k\leq 2n-d-1$.
 The long exact sequence of the pair $(M,\partial M)$ implies
 that $\partial M\ra M$ is isomorphic in degree $k\leq 2n-2-d$
 and epimorphic in degree $k=2n-1-d$.
 Because the homology of the sphere bundle $\partial M$
 vanishes in degree $k=2n-1-d$
 the epimorphism is in fact injective.
\end{proof}

\begin{cor}
 \label{prop:pi1epi}
 Under the assumptions of Theorem \ref{thm:mainthm}
 the inclusion $M_0\subset M$
 induces an epimorphism on fundamental groups.
 If in addition $\pi_1Q$ is abelian,
 then the inclusion $M_0\subset M$
 will be $\pi_1$-isomorphic.
\end{cor}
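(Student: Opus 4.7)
The plan is to deduce the corollary by combining Proposition \ref{prop:epimorphic} with Proposition \ref{prop:homologytyp}, using that the inclusion $Q\subset M$ (given by a section of the sphere bundle) factors through $M_0$.

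First I would record the factorisation. By construction of $M_0$ in Section \ref{sec:homotopytype}, $M_0$ is the image of a small disc sub-bundle inside $\R\times T^*Q\times\D\times\C^{n-1-d}$ centred on $\{0\}\times Q\times\{1-\varepsilon\}\times\{0\}$, and this sub-bundle is (up to ambient isotopy) a tubular neighbourhood of the isotoped inclusion $Q\hookrightarrow M$ from Proposition \ref{prop:epimorphic}. In particular the inclusion $Q\hookrightarrow M$ factors as
\[
Q\;\hookrightarrow\;M_0\;\hookrightarrow\;M,
\]
where the first arrow is the zero section of the disc bundle $M_0=D\big(T^*Q\oplus\underline{\R}^{2n+1-2d}\big)\to Q$. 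Since a disc bundle deformation retracts onto its zero section, $Q\hookrightarrow M_0$ is a homotopy equivalence; in particular it induces isomorphisms on $\pi_1$.

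For the epimorphism statement, Proposition \ref{prop:epimorphic} says $\pi_1 Q\to\pi_1 M$ is surjective. Composing with the inverse of the $\pi_1$-isomorphism $\pi_1 Q\xrightarrow{\cong}\pi_1 M_0$, surjectivity passes to $\pi_1 M_0\to\pi_1 M$, which is the first claim.

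For the injectivity statement under the abelianness hypothesis, I would argue via Hurewicz. If $\pi_1 Q$ is abelian, then so is $\pi_1 M$ (being a quotient of $\pi_1 Q$), and likewise $\pi_1 M_0\cong \pi_1 Q$ is abelian. Hurewicz then gives natural identifications $\pi_1 M_0\cong H_1 M_0$ and $\pi_1 M\cong H_1 M$, under which the map induced by $M_0\hookrightarrow M$ on $\pi_1$ coincides with the map induced on $H_1$. By Proposition \ref{prop:homologytyp} the latter is an isomorphism, so $\pi_1 M_0\to\pi_1 M$ is an isomorphism as well. The only point that requires a little care is the verification of the factorisation through $M_0$ up to isotopy, but this is immediate from the explicit description of $M_0$ as a neighbourhood of the distinguished section, so I do not expect any essential difficulty beyond bookkeeping.
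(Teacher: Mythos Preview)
Your argument is correct and follows essentially the same route as the paper: the paper's proof also invokes the homotopy equivalence $M_0\simeq Q$ together with Proposition~\ref{prop:epimorphic} and Proposition~\ref{prop:homologytyp}, deferring the remaining bookkeeping to \cite[Section~2.5]{bgz19}. Your explicit Hurewicz step (abelian $\pi_1$ allows one to read off the $\pi_1$-isomorphism from the $H_1$-isomorphism) is exactly the content being invoked there, so nothing is missing.
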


\begin{proof}
 Using the $\pi_1$-isomorphism
 $M_0\simeq Q\subset\partial M$,
 the claim follows from
 Proposition \ref{prop:epimorphic}
 and \ref{prop:homologytyp}
 as in \cite[Section 2.5]{bgz19}.
\end{proof}

\begin{proof}[{\bf Proof of Theorem \ref{thm:mainthm} (i)}]
 The claim directly follows from
 Proposition \ref{prop:homologytyp}
 and Corollary \ref{prop:pi1epi}.
 Simply observe
 that the specific choice of section
 into the sphere bundle is irrelevant here.
\end{proof}

%%%%%%%%%%%%%%%%%%%%%%%%%%%%%%%%%%%%%%%%%%%%%%%%%%%%%

\subsection{A cobordism}
\label{subsec:acobordism}

Implementing the construction from
\cite[Section 4.2]{bschz19}
in the situation at hand
we define a cobordism
\[
X:=M\setminus\Int M_0
\;.
\]
The construction comes with the following diagram
\begin{diagram}
\HmeetV  &
	\rDashto &
	&
	&
	&
	&
	X \\
&
	&
	&
	&
	&
	\ldTo_{\substack{\textrm{gen.}\\\textrm{pos.}}} &
	\\
&
	&
	M_0 &
	\rTo^{\textrm{time-1 map}}_{\textrm{of isotopy}} &
	M &
	&
	\\
\uDash&
	\ruTo_{\substack{\textrm{gen.}\\\textrm{pos.}}} &
	&
	&
	&
	\luDoubleto &
	\uDashto\\
\partial M_0 &
	&
	\uTo_{\simeq} &
	&
	&
	&
	\partial M \\
\dLine &
	\luTo &
	 &
	&
	\uTo &
	\ruTo&
	\\
&
	&
	Q_0 &
	\rTo^{\textrm{time-1 map}}_{\textrm{of isotopy}} &
	Q &
	&
	\uTo \\
\HmeetV &
	&
	&
	\rLine^{\substack{\\ \\ \\ \textrm{time-1 map}}}_{\textrm{of former isotopy}} &
	&
	&
	\HmeetV \\
\end{diagram}
that commutes up to homotopy.
We explain the diagram:
Set
\[
Q_0\equiv
\{0\}\times Q\times\{(1-\varepsilon')\}\times\{0\}
\;,
\]
where $\varepsilon'\in(0,\varepsilon)$ is chosen
such that $Q_0\subset\partial M_0$.
All arrows are given by inclusion
except those whose label refers to an isotopy.
The mentioned isotopy
is an isotopy of $Q_0$ inside $M$
that is the restriction
of a diffeotopy on
$\R\times T^*Q\times\D\times\C^{n-1-d}$
obtained by shifting and rescaling
that brings $Q_0$ to $Q$ and
$\partial M_0$ to $\partial M$.
The arrow $M_0\ra M$ is obtained
from an extension of the isotopy
of $Q_0\subset M$ to $M_0$.

\begin{prop}
 \label{prop:inclusionsboundary}
 Under the assumptions of Theorem \ref{thm:mainthm}
 the inclusion maps $\partial M_0,\partial M\subset X$
 induce isomorphisms of homology groups.
 If in addition $\pi_1Q$ is abelian
 (or more generally the inclusion $Q\subset M$
 is $\pi_1$-injective)
 then the inclusions $\partial M_0,\partial M\subset X$
 will be $\pi_1$-isomorphic.
\end{prop}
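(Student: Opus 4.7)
My plan is to leverage the homology and $\pi_1$ isomorphisms $M_0 \subset M$ provided by Proposition \ref{prop:homologytyp} and Corollary \ref{prop:pi1epi} through the decomposition $M = M_0 \cup X$, whose intersection is, up to collaring, $\partial M_0$. For the inclusion $\partial M_0 \subset X$ on homology, excision gives $H_*(X,\partial M_0) \cong H_*(M,M_0) = 0$, and the long exact sequence of the pair $(X,\partial M_0)$ yields the desired isomorphism. For $\partial M \subset X$ on homology, I would invoke Poincar\'e--Lefschetz duality on the compact orientable cobordism $X$ with boundary $\partial M_0 \sqcup \partial M$, giving $H_k(X,\partial M) \cong H^{2n+1-k}(X,\partial M_0)$. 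Since $H_*(X,\partial M_0)=0$, its free and torsion parts vanish in every degree, so $H^*(X,\partial M_0)=0$ by the universal coefficient theorem; consequently $H_*(X,\partial M)=0$, and the long exact sequence of $(X,\partial M)$ closes the argument.

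For the $\pi_1$ statement, the key input is that $n-1 \geq d$ forces the sphere fibres of $\partial M_0 \to Q$ and $\partial M \to Q$ to have dimension $2n-d \geq 2$ and hence to be simply connected; so each section $Q \to \partial M_0$ and $Q \to \partial M$ induces a $\pi_1$-isomorphism. Under the additional hypothesis that $Q \subset M$ is $\pi_1$-injective (which, combined with Proposition \ref{prop:epimorphic}, makes it a $\pi_1$-isomorphism, and hence so is $M_0 \subset M$ by Corollary \ref{prop:pi1epi}), I would apply van Kampen to $M = M_0 \cup_{\partial M_0} X$:
\[
\pi_1(M) = \pi_1(M_0) *_{\pi_1(\partial M_0)} \pi_1(X).
\]
Since $\pi_1(\partial M_0) \to \pi_1(M_0)$ is an isomorphism, this pushout collapses to $\pi_1(X)$, so the inclusion $X \subset M$ is itself a $\pi_1$-isomorphism. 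Then the composition $\partial M_0 \to M_0 \to M$ being a $\pi_1$-iso forces $\partial M_0 \to X$ to be so as well; an analogous chase through the section $Q \to \partial M \subset M$ handles $\partial M \to X$.

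The step I expect to be most delicate is the $\pi_1$ diagram chase: one must verify that the various identifications of $\pi_1 Q$ inside $\pi_1(\partial M_0)$, $\pi_1(\partial M)$, $\pi_1(M_0)$, $\pi_1(M)$ and $\pi_1(X)$ are consistent with the homotopy-commutative diagram from Section \ref{subsec:acobordism}. In particular, checking that the isotopy carrying $Q_0$ to $Q$ (together with $\partial M_0$ to $\partial M$) inside $M$ respects the amalgamation structure needed for the van Kampen pushout to collapse as described will require a careful unpacking of that diagram. Once this compatibility is in place, the homology part is a routine application of excision and duality, and the $\pi_1$ part follows from the general principle that a pushout of groups along an isomorphism collapses to the other factor.
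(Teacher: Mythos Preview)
Your argument is correct and, for the homology part, actually more economical than the route sketched in the paper. The paper (following \cite[Section 4.2]{bschz19}) splits by degree: in low degrees $k\le 2n-d-1$ it runs the general-position arrows of the diagram together with the results of Section~\ref{subsec:homotypeanfundgru}, and in high degrees $k\ge d+1$ it invokes Poincar\'e duality and excision. You instead split by boundary component: excision plus Proposition~\ref{prop:homologytyp} kills $H_*(X,\partial M_0)$ in one stroke, and then Poincar\'e--Lefschetz duality on the oriented cobordism $X$ transports this to $H_*(X,\partial M)$. This avoids the case distinction entirely and does not require chasing the large diagram; the general-position input is still present, but only implicitly through your use of Proposition~\ref{prop:homologytyp}.

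For the $\pi_1$ statement your van Kampen argument is also fine, and the delicate point you flag is not really an obstacle: once you know $\pi_1(\partial M_0)\to\pi_1(M_0)$ is an isomorphism (simply-connected sphere fibre, since $2n-d\ge n+1\ge 2$), the pushout collapses and $X\hookrightarrow M$ is $\pi_1$-isomorphic; the factorisation $\partial M\subset X\subset M$ then gives the remaining isomorphism directly, without needing any compatibility beyond the evident commutativity of inclusions. The paper instead absorbs the $\pi_1$ claim into the low-degree general-position argument, which amounts to the same thing but is less explicit.
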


\begin{proof}
 The argumentation is the one
 given at the end of
 \cite[Section 4.2]{bschz19}:
 For low degrees $k\leq 2n-d-1$
 use general position arguments
 as indicated in the diagram
 and the results from
 Section \ref{subsec:homotypeanfundgru}.
 In higher degrees $k\geq d+1$
 essentially this is Poincar\'e duality
 and excision.
\end{proof}

\begin{proof}[{\bf Proof of Theorem \ref{thm:mainthm} part (a) in (ii) and (iii)}]
  We have to establish homotopy equivalence,
  resp., a diffeomorphism between $M$ and $M_0$.
  With Proposition \ref{prop:inclusionsboundary}
  this essentially follows from the relative Hurewicz
  and the $s$-cobordism theorem.
  The arguments are
  precisely as in the proof of \cite[Theorem 1.5]{bgz19}
  for $Q$ simply connected
  and \cite[Theorem 5.3]{bgz19}
  via finite coverings in the non-simply connected case.
\end{proof}

%%%%%%%%%%%%%%%%%%%%%%%%%%%%%%%%%%%%%%%%%%%%%%%%%%%%%

\subsection{Infinite coverings}
\label{subsec:infcov}

We assume the inclusion map $\partial M\subset M$
to be $\pi_1$-injective.
This will be satisfied
if $\pi_1Q$ is abelian for example.
If $Q$ is simply connected
vanishing in relative homology of the cobordism
$\{\partial M_0,X,\partial M\}$,
which will be simply connected too,
implies triviality of relative homotopy groups.
If $Q$ is not simply connected
one way to work around this
is to lift along the universal covering of $X$.
For $\pi_1Q$ finite
the universal covering space $\widetilde{X}$
will be compact
so that we are in the situation
of the previous sections.
This was used in the proof of
Theorem \ref{thm:mainthm} part (a) in (ii) and (iii)
in Section \ref{subsec:acobordism}.

If $\pi_1Q$ is infinite
we reset the moduli space:
The $\pi_1$-isomorphism $\partial M\subset M$ ensures
that the universal cover of $\hat{Z}$
is obtained by gluing similarly to Section \ref{subsec:comviagluing};
this time we glue the universal covers of the involved objects
along a lift of $\varphi$.
This makes it possible
to consider the moduli space $\WW'$
of holomorphic discs in $\widetilde{W}$
defined as in Section \ref{subsec:themodspace};
just replace $Q$ with $\widetilde{Q}$
in the definition of the Lagrangian boundary cylinders.
This places us into the situation of
\cite[Section 4.4]{bschz19}.
The change of the boundary condition is inessential
and the special choice $Q=T^d$ is not really used.
Hence, we obtain a covering $\WW'\ra\WW$
together with a proper degree $1$ evaluation map
\[
 \begin{array}{rccc}
  \ev\co & \WW'\times\D     & \lra & \widetilde{\hat{Z}}\\
            & \bigl(u=(a,f),z\bigr) & \longmapsto     & f(z)\;,
 \end{array}
\]
see \cite[Lemma 6.1]{bgz19}.
Similar to Proposition \ref{prop:epimorphic}
and \cite[Proposition 6.2 and Lemma 6.3]{bgz19} we obtain:

\begin{prop}
\label{prop:univcovepimorphic}
 Under the assumptions of Theorem \ref{thm:mainthm}
 the inclusion $\widetilde{Q}\ra\widetilde{M}$
 of universal covers
 induces a surjection of
 homology and fundamental groups.
 Further, the inclusion
 $\partial\widetilde{M}_0\ra\widetilde{X}$
 is homology surjective.
\end{prop}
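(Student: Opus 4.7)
The plan is to lift the proofs of Proposition~\ref{prop:epimorphic} and of Proposition~\ref{prop:inclusionsboundary} to the universal cover, exploiting the proper degree-one evaluation map $\ev\co\WW'\times\D\ra\widetilde{\hat{Z}}$ introduced immediately before the statement. The holomorphic disc analysis has already been redone (the construction of $\widetilde{W}$ by gluing universal covers via a lift of $\varphi$, using the $\pi_1$-injectivity hypothesis), so all the topological consequences drawn in Sections~\ref{subsec:lifttop}, \ref{subsec:homotypeanfundgru}, and \ref{subsec:acobordism} carry over almost verbatim. What remains is to identify the factorisations that produce surjectivity onto the relevant subspaces.

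For the first claim, I would use the analogue of the commutative diagram at the end of Section~\ref{subsec:lifttop}: the inclusion $\WW'\times\{1\}\hookrightarrow\WW'\times\D$ is a homotopy equivalence, and the restricted evaluation lands in $\R\times T^*\widetilde{Q}\times\{1\}\times\C^{n-1-d}$, which strong deformation retracts onto the zero section $\widetilde{Q}$. Since $\ev$ is proper of degree one it is homology surjective, so this factorisation yields surjectivity of $H_*\widetilde{Q}\ra H_*\widetilde{\hat{Z}}$. Combining with the deformation retract $\widetilde{\hat{Z}}\simeq\widetilde{M}$ (obtained by lifting the retract of $\hat{Z}$ onto $M$, using $\pi_1$-injectivity of $\partial M\subset M$) gives the claimed homology surjection $\widetilde{Q}\ra\widetilde{M}$. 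The $\pi_1$-surjection is automatic, as both $\widetilde{Q}$ and $\widetilde{M}$ are simply connected; alternatively it follows from the covering argument of \cite[Section~2.5]{bgz19}.

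For the second claim, I would mirror the proof of Proposition~\ref{prop:inclusionsboundary}, transported to the universal cover via the diagram from Section~\ref{subsec:acobordism}. In low degrees $k\leq 2n-d-1$ general position still provides an isomorphism between the homology of a section and of the ambient sphere bundle, so the surjection $\widetilde{Q}\ra\widetilde{M}$ passes to the desired surjection $H_*(\partial\widetilde{M}_0)\ra H_*(\widetilde{X})$. The main obstacle is the complementary range: when $\pi_1Q$ is infinite the cover $\widetilde{M}$ is non-compact, so the Poincar\'e--Lefschetz duality step used in Proposition~\ref{prop:inclusionsboundary} is no longer available. The replacement, following \cite[Lemma~6.3]{bgz19}, is to revisit the evaluation map itself: the end of $\WW'\times\D$ consists of standard discs lying outside $\widetilde{M}_0$, so representatives of classes in $H_*\widetilde{X}$ can be constructed in $\partial\widetilde{M}_0$ by the same proper-degree-one factorisation argument, bypassing duality altogether. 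This last step is the delicate point and the genuine reason the holomorphic analysis had to be repeated on the cover.
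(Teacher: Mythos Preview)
Your proposal is correct and matches the paper's approach: the paper does not give a standalone proof but simply records that the statement follows ``similar to Proposition~\ref{prop:epimorphic} and \cite[Proposition~6.2 and Lemma~6.3]{bgz19}'', and you have accurately unpacked what this means --- the degree-one factorisation through $\widetilde{Q}$ for the first claim, and the replacement of the Poincar\'e duality step by the holomorphic-disc argument of \cite[Lemma~6.3]{bgz19} for the second. Your remark that the $\pi_1$-surjection is vacuous because both spaces are simply connected is also correct.
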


Because the universal cover $\widetilde{X}$
is not compact for $\pi_1Q$ infinite
Poincar\'e duality delivers no information
about relative homology groups
in contrary to our argument in
Proposition \ref{prop:homologytyp}.
But we can say the following:

\begin{proof}[{\bf Theorem \ref{thm:mainthm} part (b) in (ii) and (iii)}]
 Because the universal cover of $Q$ is contractible
 so is $\widetilde{M}$ by
 Proposition \ref{prop:univcovepimorphic}.
 Hence, the inclusion
 $\widetilde{M}_0\subset\widetilde{M}$
 is a homotopy equivalence.
 This follows with the arguments from the proof of
 \cite[Theorem 7.2]{bgz19}.
 With the proof of \cite[Theorem 9.1]{bgz19},
 which in our situation
 is particularly easy because of the extra codimension,
 it follows that the boundary inclusions of
 $\widetilde{X}$ are homotopy equivalences.
 Hence, $X$ is in fact an $h$-cobordism.
 For the diffeomorphism type
 then apply the $s$-cobordism theorem.
\end{proof}

If $\partial M$ is a simple space,
which for example is satisfied whenever
$Q$ is a simple space and
$\partial M\ra Q$ a trivial sphere bundle,
then vanishing of relative homology of
$(X,\partial M_0)$ and $(X,\partial M)$, resp.,
implies homotopy equivalence
of each of the boundary inclusions of the cobordism
$\{\partial M_0,X,\partial M\}$.
The basic idea here is
that the kernel of the Hurewicz homomorphism
is made out of the action of the fundamental group,
which we now assume to be trivial,
see \cite[Section 8]{bgz19}:

\begin{proof}[{\bf Theorem \ref{thm:mainthm} part (c) in (ii) and (iii)}]
  Follows with the same arguments as in
  \cite[Theorem 1.7 and Example 9.3 (b)]{bgz19}.
\end{proof}

%%%%%%%%%%%%%%%%%%%%%%%%%%%%%%%%%%%%%%%%%%%%%%%%%%%%%%%%%%%%%%%%%%%%%%

\begin{ack}
  This work is inspired by the PhD Thesis \cite{kbdiss}
  of Kilian Barth and the Master Thesis \cite{jschdiss}
  of Jay Schneider who obtained the
  crucial $C^0$-bounds via a uniform relative monotonicity lemma
  in similar situations.
  We are grateful to Peter Albers, Hansj\"org Geiges, and Sefan Suhr
  for their valuable comments
  and to the University of Heidelberg for providing
  such a wonderful work environment
  that allowed us to finish this work. 
\end{ack}

%%%%%%%%%%%%%%%%%%%%%%%%%%%%%%%%%%%%%%%%%%%%%%%%%%%%%%%%%%%%%%%%%%%%%%

\end{document}